%------------------------------------------------------------------------------
% Here please write the date of submission of paper or its revisions:
%------------------------------------------------------------------------------
%
\documentclass[12pt]{amsart}
\usepackage{amsmath, amsthm, amscd, amsfonts, amssymb, graphicx, color}
\usepackage[bookmarksnumbered, colorlinks, plainpages]{hyperref}

\textheight 22.5truecm \textwidth 14.5truecm
\setlength{\oddsidemargin}{0.35in}\setlength{\evensidemargin}{0.35in}

\setlength{\topmargin}{-.5cm}

\pagestyle{plain}

\newtheorem{theorem}{Theorem}[section]

\newtheorem{proposition}[theorem]{Proposition}

\theoremstyle{definition}
\newtheorem{definition}[theorem]{Definition}
\newtheorem{example}[theorem]{Example}

\theoremstyle{remark}
\newtheorem{remark}[theorem]{Remark}
\numberwithin{equation}{section}

     \newcommand{\lra}{\longrightarrow} \newcommand{\ra}{\rightarrow}
 \newcommand{\h}{\mathcal{H}}
  \renewcommand{\H}{\mathcal{H}}
  \newcommand{\D}{\mathcal{D}}

 \newcommand{\F}{\mathcal{F}}
  \renewcommand{\L}{\mathcal{L}}

 \newcommand{\fN}{\mathfrak{N}}

 \newcommand{\dom}{\mathrm{dom}}
  \newcommand{\ran}{\mathrm{ran}}
 \newcommand{\spec}{\mathrm{spec}}
 \renewcommand{\ker}{\operatorname{ker}}

 \newcommand{\R}{\mathbb{R}}

 \newcommand{\N}{\mathbb{N}}

  \newcommand{\lN}{{\ell}_2(\N)}
  \newcommand{\kN}{{\mathfrak{k}}_2(\N)}

\renewcommand{\Re}{\mathfrak{K}}
\newcommand{\hsp}{{\hspace{-1pt}}}
\newcommand{\hs}{{\hspace{1pt}}}
\newcommand{\ip}[2][\cdot\hs]{\langle #1,#2\rangle}
\newcommand{\id}{\mathrm{id}}
\newcommand{\dd}{\mathrm{d}}

\renewcommand{\[}{\begin{equation}}
\renewcommand{\]}{\end{equation}}
\newcommand{\wegengruen}{\end{equation}}

\begin{document}
\setcounter{page}{1}

\title[Frames in Krein spaces arising  from a $W$-metric]
{Frames in Krein spaces arising from a non-regular $W$-metric}

\author[K. Esmeral, O. Ferrer, E. Wagner]{Kevin Esmeral$^1$, Osmin Ferrer$^{2}$ and Elmar Wagner$^3$}

\address{$^{1}$ Cinvestav,  Instituto Polit\'ecnico Nacional, 
Av.Instituto Polit\'ecnico Nacional 2508, Col. San Pedro Zacatenco, CP 07360, D.F., 
M\'exico.}
\email{matematikoua@gmail.com}

\address{$^{2}$  Departamento de Matem\'aticas,  Universidad Surcolombiana, Neiva, Huila, Colombia.}
\email{osmin.ferrer@usco.edu.co}

\address{$^{3}$ Instituto de F\'isica y Matem\'aticas,
Universidad Michoacana de San Nicol\'as de Hidalgo. Edificio C-3, Ciudad Universitaria.
C. P. 58040 Morelia, Michoac\'an, M\'exico}
\email{elmar@ifm.umich.mx}

%\dedicatory{This paper is dedicated to Professor ABCD}

%\subjclass[2010]{Primary 42C15; Secondary 46C05, 46C20.}

%\keywords{Frames, Krein spaces, unbounded Gram operator, W-metric.}

%\date{Received: xxxxxx; Revised: yyyyyy; Accepted: zzzzzz.
%\newline \indent $^{*}$ Corresponding author}

\begin{abstract}
A definition of frames in Krein spaces is stated and a 
complete characterization is given by comparing them 
to frames in the associated Hilbert space. The basic tools 
of frame theory are described in the formalism of Krein spaces. 
It is shown how to transfer a frame for Hilbert spaces to 
Krein spaces given by a  $W$-metric, where 
the Gram operator $W$ is not necessarily regular 
and possibly unbounded. 
\end{abstract} 

\maketitle

\section{Introduction}
The theory of frames for Hilbert spaces originates from \cite{DS} and was further developed in \cite{Dau,DGM}. 
Frames can be thought of as ``over-complete bases" and their over-completeness makes them more flexible than 
orthonormal bases. They proved to be a powerful tool, e.g., in signal processing and 
wavelet analysis \cite{G}. It is only natural that one wants to have the same tools available for Krein spaces. 
This, of course, can be done without any changes by considering the associated Hilbert space. 
However we prefer a more direct approach by taking the structure of Krein spaces into account, 
avoiding thus the switching between Krein and Hilbert spaces. 
All results of the present paper are part of the first author's master thesis \cite{E}.
Similar ideas have been developed independently in  \cite{GMMM} and~\cite{PW}. 

Our approach to the theory of frames for Krein spaces is presented in Section~\ref{FiKS}. 
There we give a definition of frames for Krein spaces by replacing the 
positive definite inner product in the definition of a frame for a Hilbert space by 
a (possibly) indefinite inner product. As expected, we prove in Theorem \ref{prop1} that 
the theory of frames for a Krein space and 
the theory of frames for the associated Hilbert space 
are equivalent. 
Then we reformulate the basic tools of frame theory in the language of Krein space. 
For instance, the pre-frame operator is allowed to have a Krein space as its domain. 
Our definitions are such that the frame operator (Definition \ref{defS}) and the 
Frame Decomposition Theorem (Theorem \ref{THM}) are exactly as in the Hilbert space case with 
the positive definite inner product replaced by the inner product of the Krein space. 
Furthermore, we discuss canonical dual frames (Proposition \ref{DF}), tight frames and their relation 
to $J$-orthonormalized bases (Proposition \ref{tf}), and the relation between frames 
and orthogonal projections commuting with the fundamental symmetry (Proposition~\ref{link}). 

In Section \ref{Sec-W}, we apply our theory to Krein spaces given by 
a non-regular or/and unbounded Gram operator $W$ acting on a Hilbert space $\h$. 
The fundamental observation (Proposition~\ref{not}) is that, 
whenever the Gram operator $W$ is non-regular or unbounded, a frame for $\h$ 
can never be a frame for the Krein space $\h_W$ constructed from $W$. 
Nevertheless, we show how to transfer any frame for the Hilbert space $\h$ to a frame 
for the Krein space $\h_W$. The basic idea is to construct a unitary operator 
between $\h_W$ and $\h$ starting from the square root $\sqrt{|W|}$. 
For a better understanding, we distinguish between the cases 
where $W$ is bounded (Theorem~\ref{Wbounded}), and 
where $W$ is unbounded but  \,$0\notin \spec(W)$ (Theorem~\ref{Wunbounded}). 
The general situation is treated in Theorem~\ref{V3}. Note that these results also 
apply to positive Gram operators $W$ when $\h_W$ is actually a 
Hilbert space.

\section{Preliminaries} 

The purpose of this section is to fix notations and to recall the basic elements of frame theory. 
For more details on Krein spaces, we recommend \cite{A} and \cite{B}. 
A comprehensive introduction to frame theory can be found in \cite{Chr}. 

\subsection{Krein spaces} 
\label{KS}
Throughout this paper, $(\Re, [\cdot,\cdot])$ denotes a Krein space 
with fundamental decomposition $\Re_+\oplus\Re_-$ and 
fundamental symmetry $J$ given by 
\[ \label{J}
 J (k^+ + k^-) = k^+ - k^-,\qquad  k^+ + k^-\in \Re_+\oplus\Re_-, 
\]
such that the $J$-inner product
\[ \label{ipJ}
[  h^+ \hsp +\hsp  h^-\hs, k^+\hsp  +\hsp  k^-]_J:= [ h^+\hsp  +\hsp  h^-\hs, J (k^+\hsp  +\hsp  k^-)] 
= [  h^+ \hs, k^+ ] -[   h^-\hs,  k^-], \ \ h^\pm,k^\pm\hsp \in\hsp \Re_\pm, 
\]
turns $(\Re, [\cdot,\cdot]_J)$ into a Hilbert space. %Note that $J^2 =J$. 
The positive definite inner product $[\cdot,\cdot]_J$ defines a topology on $\Re$ by the $J$-norm 
$$
\|k\|_{J}:= \sqrt{[k\hs , k]_J} =  \sqrt{[k\hs , Jk]},\qquad k\in\Re, 
$$
and  $\Re_+\oplus\Re_-$ becomes the orthogonal sum of Hilbert spaces. 
Note that $J^2=1$ by \eqref{J}.

Topological concepts     
such as convergence and bounded linear operators refer to the topology induced by 
the Hilbert space norm $\|\cdot\|_{J}$. 
The angle brackets  $\ip[\cdot]{\cdot}$ are reserved to denote the 
positive definite inner product of a given Hilbert space $\h$.  
When no confusion can arise, we write shortly $\Re$ for a Krein space $(\Re, [\cdot,\cdot])$,  
and $\h$ for a Hilbert space $(\h,\ip[\cdot]{\cdot})$. 
The symbol  $\kN$ will be used if $\Re = \lN$ as a complex vector space. 
A $J$-orthonormal basis is a complete family $\{e_n\}_{n\in\fN}\subset \Re$ 
such that $[e_n,e_m]=0$ for $n\neq m$ and $|[e_n,e_n]|=1$. 
A subspace $V\subset \Re$ is said to be uniformly $J$-positive (resp.\ uniformly $J$-negative) 
if there exists an $\varepsilon >0$ such that 
$[v,v]\geq \varepsilon \|v\|_J^2$ (resp.\ $-[v,v]\geq \varepsilon \|v\|_J^2$) for all $v\in V$.

The unique adjoint $T^* : (\Re_2, [\cdot,\cdot]_2)\ra (\Re_1, [\cdot,\cdot]_1)$ of a 
bounded linear operator $T: (\Re_1, [\cdot,\cdot]_1)\ra (\Re_2, [\cdot,\cdot]_2)$ 
is always taken with respect to the specified inner products, i.e., 
$$
[T^* h,k  ]_1 = [h, T k]_2\quad \text{for \,all}\ \ k\in\Re_1,\ \,h\in\Re_2. 
$$
For instance, it follows from \eqref{J} and \eqref{ipJ} that 
$J: (\Re, [\cdot,\cdot]) \ra (\Re, [\cdot,\cdot])$ and 
$J: (\Re, [\cdot,\cdot]_J) \ra (\Re, [\cdot,\cdot]_J)$
are self-adjoint, i.e.,  $J^*=J$. 
Furthermore, 
the identity operator 
\[ \label{I}
I_J : (\Re, [\cdot,\cdot])\lra (\Re, [\cdot,\cdot]_J),\qquad I_J\hs k=k, 
\]
has 
the adjoint $J: (\Re, [\cdot,\cdot]_J)\ra (\Re, [\cdot,\cdot])$ since 
$[I_J \hs k,h]_J = [k, h]_J =[k,J\hs  h]$
for all  $h,k\in\Re$. A self-adjoint operator $A=A^*$ on 
$(\Re, [\cdot,\cdot])$ is called uniformly positive, if  
$[k,Ak] \geq \varepsilon [k, k]_J$ for a suitable constant $\varepsilon >0$ 
and all $k\in\Re$.  Equivalently, since $[k,Ak]=[k,JAk]_J$, 
we have $JA\geq \varepsilon$ on the Hilbert space $(\Re, [\cdot,\cdot]_J)$. 
As a consequence, $A$ has a bounded inverse.

The fundamental projections 
\[  \label{P}
P_+:=\mbox{$\frac{1}{2}$}(1+J),\qquad P_-:= \mbox{$\frac{1}{2}$}(1-J)
\]
act on $\Re=\Re_+\oplus\Re_-$ by $P_+(k^+\hsp +\hsp k^-) = k^+$ and 
$P_-(k^+\hsp +\hsp k^-) = k^-$. Equation \eqref{P} implies immediately that  $P_\pm$ 
and  $J$ commute. Moreover, $P_+$ and $P_-$ are orthogonal projections, 
i.e.\ $P_{\pm}^2= P_{\pm}= P_{\pm}^*$, regardless of whether we 
consider $[\cdot,\cdot]$ or $[\cdot,\cdot]_J$ on $\Re$. 

We close this subsection with a short review of  
Krein spaces given by a \emph{regular} Gram operator $W$ \cite{A}. 
The case when $W$ is unbounded or $0\in\spec(W)$  will be discussed in 
Section \ref{Sec-W}.  Let thus $W$ be a bounded self-adjoint operator 
on a Hilbert space $(\h,\ip{\cdot})$ such that $0\notin\spec(W)$. 
Define a non-degenerate inner product on $\h$ by 
\[ \label{ipW}
        [f,g]:= \ip[f]{W\hs g},\quad f,\, g \in\h. 
\] 
Since $W^*= W$ and $0\notin\spec(W)$, the operator $J$ given by the 
polar decomposition $W=J\hs |W|$ is self-adjoint and unitary, i.e.\ $J^* = J$ and $J^2 = 1$. Hence 
the orthogonal projections $P_+$ and $P_-=1-P_+$ from \eqref{P} 
project onto the eigenspaces corresponding to the 
eigenvalues $1$ and $-1$ of $J$, respectively. As a consequence, 
$J$ acts on the orthogonal sum  
$\h=P_+\h \oplus P_-\h$ by  
$J (h^+ \hsp + \hsp  h^-) = h^+ \hsp  - \hsp  h^-$, where $h^\pm\in P_\pm\h$. 
From $J^2 = 1$, we get $[f,g]_J := \ip[f]{JWg}= \ip[f]{|W|g}$ for all $f,g\in\h$. 
Moreover, 
the condition $0\notin\spec(W)$ implies that $\varepsilon\leq |W|\leq \| W\|$ 
for some $\varepsilon >0$, which means that 
$\varepsilon\ip[h]{h}\leq \ip[h]{|W|\hs h} \leq \| W\|\ip[h]{h}$ for all $h\in\h$.  
Therefore, the Hilbert space norm $\sqrt{\ip{\cdot}}$  and the $J$-norm $\|\cdot\|_J$ 
are equivalent. In particular, $\h=P_+\h \oplus P_-\h$ remains an orthogonal sum 
of Hilbert spaces with respect to $[\cdot,\cdot]_J$. 
Summarizing, we have shown that $(\h, [\cdot,\cdot])$ is a Krein space with 
fundamental decomposition $\h=P_+\h \oplus P_-\h$ and 
fundamental symmetry $J$ such that $\ip{\cdot}$ and $[\cdot,\cdot]_J$ 
define equivalent norms. 

\subsection{Frames in Hilbert spaces} 
\label{FiHS}
 Let $\h$ be an (infinite-dimensional) Hilbert space with inner product $\ip{\cdot}$. 
 A frame for $\h$ is a 
 countable sequence $\left\{f_{n}\right\}_{n\in \N}$ in $\h$ 
 such that there exist constants $0< A\leq B<\infty$ satisfying 
\begin{equation}                \label{fH}
A\|f\|^{2}
\,\leq\, \sum_{n\in \N}\left|\ip[f_{n}]{f}\right|^{2}
\,\leq\, B\|f\|^{2}\quad \text{for all} \ \, f\in\h.  
\end{equation}
Given a frame $\{f_n\}_{n\in\N}$ for $\h$, 
one defines the pre-frame operator by 
\[   \label{pF}
T_0: \lN\, \lra \, \h,\quad T_0\hs (\alpha_n)_{n\in\N} = \sum_{n\in\N} \alpha_n f_n\hs .
\]
It follows from the Bessel condition $\sum_{n\in \N}\left|\ip[f_{n}]{f}\right|^{2}\leq B\|f\|^{2}$ 
that $T_0$ is well defined and bounded. A direct computation shows that 
\[ \label{apF}
T_0^* :\h\,\lra\,  \lN, \quad T_0^*(f) = (\ip[f_n]{f})_{n\in\N} 
\]
yields its adjoint. 
The frame operator $S$ is defined by $S:=T_0\hs T_0^*$. 
By Equations \eqref{pF} and \eqref{apF}, 
\[  \label{Sh}
Sf =\sum_{n\in\N} \ip[f_n]{f} f_n, \quad f\in\h. 
\]
The inequalities in \eqref{fH} imply that $A\leq S\leq B$, thus $S$ has bounded inverse. 
Inserting \eqref{Sh} into $f = SS^{-1}f= S^{-1} S f$ and using $(S^{-1})^*= S^{-1}$,  
we arrive at the so-called Frame Decomposition Theorem: 
\[ \label{FDT}
f= \sum_{n\in \N}\ip[S^{-1} f_{n}]{f} f_n = \sum_{n\in \N}\ip[ f_{n}]{f} S^{-1}f_n\quad 
\text{for all} \ \,f\in\h. 
\]
From $S^{-1}f =\sum_{n\in \N}\ip[ S^{-1}  f_{n}]{f} S^{-1}f_n$ and $B^{-1}\leq S^{-1} \leq A^{-1}$, 
it follows that $\{S^{-1} f_n\}_{n\in\N}$ is a frame for $\h$ admitting the 
frame bounds $B^{-1}\leq A^{-1}$. 
A frame $\{ g_n\}_{n\in\N}\subset \h$ satisfying 
\[\label{df}
f= \sum_{n\in \N}\ip[g_n]{f} f_n = \sum_{n\in \N}\ip[ f_{n}]{f} g_n\quad 
\text{for all} \ \,f\in\h. 
\] 
is called a dual frame of $\{f_n\}_{n\in\N}$. 
By \eqref{FDT}, $\{S^{-1} f_n\}_{n\in\N}$ 
yields an example of a dual frame of $\{f_n\}_{n\in\N}$,  
and one usually refers to it as the canonical dual frame.

\section{Frames in Krein spaces} 
\label{FiKS}
In this section we state a definition of frames in Krein spaces 
and prove that they are in one-to-one 
correspondence to frames in the associated Hilbert space. 
Some basic tools of frame theory such as 
pre-frame operator, frame operator and 
dual frame are then described in the language of 
Krein spaces. 
\begin{definition}  \label{fK}
 Let $\Re$ be a Krein space. A countable sequence $\left\{k_{n}\right\}_{n\in \fN}\subset \Re$ 
 is called a \emph{frame for} $\Re$, if there exist constants $0< A\leq B<\infty$ such that
\begin{equation} 
A\hs \|k\|_{J}^{2}
\,\leq\, \sum_{n\in \fN}\left|\left[k_{n},k\right]\right|^{2}
\,\leq\, B\hs \|k\|_{J}^{2}\quad \text{for all} \  \,k\in\Re.  \label{m1}
\end{equation}
\end{definition}
\begin{remark}
Since we are mostly interested in infinite-dimensional spaces, and 
since one can always fill up a finite frame with zero elements, 
we assume that $\fN=\N$ whenever the finiteness of $\fN$ is of no importance. 
\end{remark}

As in the Hilbert space case, we refer to $A$ and $B$ as frame bounds. 
The greatest constant $A$ and the smallest constant $B$ satisfying \eqref{m1} 
are called optimal lower frame bound and optimal upper frame bound, respectively. 
A frame is tight, if one can choose $A=B$. If a frame ceases to be a frame 
when an arbitrary element is removed, the frame is said to be exact.

The next theorem shows that frames for a Krein space are essentially the same objects as frames 
for the associated Hilbert space. 
\begin{theorem}\label{prop1}
Let $\Re$ be a Krein space and $\{k_{n}\}_{n\in\N}$ a sequence in $\Re$. 
The following statements are equivalent:\\[6pt]
\phantom{ii}i)   $\{k_{n}\}_{n\in\N}$ is a frame for the Krein space $\Re$ with frame bounds $A\leq B$.\\[6pt]
\phantom{i}ii) $\{Jk_{n}\}_{n\in\N}$ is a frame for the  Krein space $\Re$ with frame bounds $A\leq B$.\\[6pt]
iii) $\{k_{n}\}_{n\in\N}$ is a frame for the  Hilbert space $\left(\Re, [\cdot,\cdot]_{J}\right)$ with frame bounds $A\leq B$.\\[6pt]
\phantom{}\hs\hs iv) $\{Jk_{n}\}_{n\in\N}$ is a frame for the  Hilbert space $\left(\Re, [\cdot,\cdot]_{J}\right)$ with frame bounds $A\leq B$.
\end{theorem}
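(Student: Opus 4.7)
The plan is to reduce all four sums appearing in (i)--(iv) to one of two basic forms, and then to show that these two forms are equivalent via the change of variable $k\mapsto Jk$.

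First I would collect the identities available from Section~\ref{KS}. The defining relation \eqref{ipJ} reads $[h,k]_J=[h,Jk]$, and the self-adjointness $J^*=J$ with respect to $[\cdot,\cdot]$ gives $[Jh,k]=[h,Jk]$ as well. Combining $J^*=J$ with $J^2=1$ shows that $J$ is unitary on the Hilbert space $(\Re,[\cdot,\cdot]_J)$; in particular $\|Jk\|_J=\|k\|_J$ for every $k\in\Re$, and $J:\Re\to\Re$ is bijective.

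Next, for fixed $k\in\Re$ and $n\in\N$, the identities above yield termwise
\[
|[Jk_n,k]_J|=|[Jk_n,Jk]|=|[k_n,k]|,\qquad |[k_n,k]_J|=|[k_n,Jk]|=|[Jk_n,k]|.
\]
Hence the series in (i) and in (iv) coincide termwise, as do those in (ii) and in (iii). Since all four inequalities carry the same frame bounds $A\leq B$ and the same $\|k\|_J^2$ on both sides, this yields the equivalences (i)$\Leftrightarrow$(iv) and (ii)$\Leftrightarrow$(iii) at once.

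To bridge the two pairs I would substitute $k\mapsto Jk$. Because $J$ is a bijection of $\Re$ with $\|Jk\|_J=\|k\|_J$, the inequality in (i) holds for every $k\in\Re$ if and only if the same inequality holds with $k$ replaced by $Jk$; rewriting $|[k_n,Jk]|=|[k_n,k]_J|$ then turns this replacement into condition (iii). This establishes (i)$\Leftrightarrow$(iii) and closes the chain of equivalences. The argument is essentially bookkeeping with the structural identities of Section~\ref{KS}; the only small pitfall is keeping track of which adjoint relation applies to which inner product, and recording that $J$ is an isometry of $(\Re,[\cdot,\cdot]_J)$ so that the change of variable $k\mapsto Jk$ preserves the universal quantifier.
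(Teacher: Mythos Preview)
Your proof is correct and follows essentially the same approach as the paper: both establish (i)$\Leftrightarrow$(iv) and (ii)$\Leftrightarrow$(iii) by the termwise identities coming from $[\cdot,\cdot]_J=[\cdot,J\cdot]$ and $J^*=J$, $J^2=1$, and then close the chain using that $J$ is a unitary (hence norm-preserving) bijection on $(\Re,[\cdot,\cdot]_J)$. The only cosmetic difference is that the paper links the two pairs via (iii)$\Leftrightarrow$(iv) while you link them via (i)$\Leftrightarrow$(iii), but both reductions rest on the same isometry property of $J$.
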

\begin{proof} The equivalence of i) and iv) follows from 
$$\left|\left[k,k_{n}\right]\right|^{2}=\left|\left[ Jk,Jk_{n}\right]\right|^{2}=\left|\left[k,Jk_{n}\right]_J\right|^{2}.$$ 
The same argument applied to   $\{Jk_{n}\}_{n\in\N}$ together with $J^2=1$
proves the equivalence of ii) and iii). 
Since $J$ is a unitary operator on  $\left(\Re, [\cdot,\cdot]_{J}\right)$, the equivalence 
iii) and iv) is obvious. This finishes the proof. 
\end{proof} 

Recall that, given a frame $\{f_n\}_{n\in\N}$ for a Hilbert space $\h$, the pre-frame operator 
$T_0: \lN \ra \h$
%,\quad T_0\hs (\alpha_n)_{n\in\N} = \sum_{n\in\N} \alpha_n k_n.
is defined by \eqref{pF}. 
In the special case when  $\h=\lN$ and 
$\{f_n\}_{n\in\N}$ coincides with  the standard basis of $\lN$,  $T_0$ is just the identity. 
By Theorem \ref{prop1}, we could define a pre-frame operator $T$ for frames in a Krein space in the same way. 
But then, if the Hilbert space $\h$ is replaced by a non-trivial Krein space  of sequences $\kN$, 
the pre-frame operator $T:\lN\ra \kN$ can never be the identity operator in the strict sense. 
Therefore we also allow non-trivial Krein spaces
for the domain of the pre-frame operator. 
\begin{definition}
 Let $(\Re, [\cdot,\cdot])$ be a Krein space with fundamental symmetry $J$ and let 
 $(\kN, [\cdot,\cdot])$ be a Krein space with fundamental symmetry $\tilde J$ 
 such that $[\cdot,\cdot]_{\hsp{\tilde J}}$ agrees with the standard inner product  $\ip{\cdot}$ on $\lN$. 
 Given a frame $\{k_{n}\}_{n\in\N}$ for $\Re$, 
 the linear map 
 \[ \label{T}
 T: \kN \lra \Re,\quad T\hs (\alpha_n)_{n\in\N} = \sum_{n\in\N} \alpha_n k_n
 \]
 is called \emph{pre-frame operator}. 
\end{definition}

Note that $T$ is well defined and bounded since it factorizes as 
\[ \label{fac}
 (\kN, [\cdot,\cdot]) \overset{I_{\tilde J}}{\lra} (\lN,\ip{\cdot}) \overset{T_{0}}{\lra} (\Re, [\cdot,\cdot]_J) 
 \overset{I_{J}^{-1}}{\lra} (\Re, [\cdot,\cdot]) 
\]
and all operators in \eqref{fac} are bounded. 
Here we use the fact that $T_0$ is a pre-frame operator by Theorem \ref{prop1}.iii), 
apply \cite[Theorem 5.5.1]{Chr} to $T_0$,  
and observe that $I_{\tilde J}$ and $I_{ J}$ 
defined by \eqref{I} are bijections. 
Since $T= I_{ J}^{-1}\hs T_0\hs I_{\tilde J}$\hs, it follows from 
the Hilbert space frame theory applied to $T_0$  that a sequence $\{k_{n}\}_{n\in\N}$ 
is a frame for $\Re$, if and only if $T$ is well defined (i.e.\ bounded)  and surjective.

The adjoint of $T$ is given by 
\[  \label{T*}
T^{*}k= \tilde{J}([k_n,k])_{n\in\N}, \quad k\in\Re. 
\]
Indeed, for all $(\alpha_n)_{n\in\N}\in \kN$ and $k\in\Re$, one has  
$$
[T(\alpha_n)_{n\in\N}, k]=\hsp \sum_{n\in\N} \bar\alpha_n [k_n,\hsp k] 
= \ip[(\alpha_n)_{n\in\N}]{([k_n,\hsp k])_{n\in\N}} 
=[(\alpha_n)_{n\in\N}, \tilde{J}([k_n,\hsp k])_{n\in\N}]. 
$$
To obtain a frame operator given by a formula analogous to \eqref{Sh} with $\ip{\cdot}$ replaced 
by $[\cdot,\cdot]$, we make the following definition: 
\begin{definition} \label{defS}
Let $(\Re, [\cdot,\cdot])$ be a Krein space with fundamental symmetry $J$, 
 $(\kN, [\cdot,\cdot])$ a Krein space with fundamental symmetry $\tilde J$ 
 such that $[\cdot,\cdot]_{\tilde J}$ agrees with the standard inner product  $\ip{\cdot}$ on $\lN$,  
 and $\{k_{n}\}_{n\in\N}\subset\Re$  a frame for $\Re$. 
The operator 
\[  \label{S}
S:= T\hs\tilde J\hs T^{*}
\] 
is called \emph{frame operator}. 
\end{definition}
It follows immediately from \eqref{T}, \eqref{T*} and $\tilde{J}^2= \id$ that 
\[    \label{Sk}
 Sk = \sum_{n\in\N} [k_n\hs,k]\hs k_n,\quad k\in\Re,         
\]
as desired. 
Moreover, $S$ is clearly self-adjoint. 
If $(\kN, [\cdot,\cdot])= (\lN, \ip{\cdot})$, then $S=TT^*$, exactly as in the 
Hilbert space case. 

Equation \eqref{Sk} yields  $[k,Sk]= \sum_{n\in\N} |[k_n\hs,k]|^2$ for all $k\in\Re$. 
Replacing $k$ by $Jk$ and writing $[k_n\hs,Jk]=[k_n\hs,k]_J$, 
we get from Equation \eqref{m1} and Theorem \ref{prop1}.iii) that 
$$
A\|k\|_{J}^{2}\leq   [k\hs,SJk]_J         \leq B\|k\|_{J}^{2}, \quad k\in\Re.
$$
Hence $SJ$ is a strictly positive operator on 
the  Hilbert space $\left(\Re, [\cdot,\cdot]_{J}\right)$ satisfying 
$A \leq SJ \leq B$. Therefore $S$ is invertible with bounded inverse, and 
$$
B^{-1}\leq JS^{-1} \leq A^{-1}, 
$$
where we used $J^{-1}=J$. Equivalently, when viewed as positive operators 
on the Krein space $\left(\Re, [\cdot,\cdot]\right)$, we have $A J\leq S \leq BJ$ 
and 
$$
B^{-1}J\leq S^{-1} \leq A^{-1}J. 
$$

The invertibility of the operator $S$ 
%the frame operator 
allows us to state the 
Frame Decomposition Theorem -- the most important theorem 
of frame theory -- exactly as in the Hilbert space case, 
cf.\ \cite[Theorem 5.1.6]{Chr}.

\begin{theorem}[Frame Decomposition Theorem] \label{THM}
Let $\{k_{n}\}_{n\in\N}$ be a frame for the Krein space $\Re$,  
and let $S$ be defined as in Definition \ref{defS}. Then 
\begin{align}
k &= \sum_{n\in\N} [k_n\hs,k]\hs S^{-1} k_n, \label{FDT1} \\
k &= \sum_{n\in\N} [S^{-1} k_n\hs,k]\hs  k_n,  \label{FDT2} 
\end{align}
and both series converge unconditionally for all $k\in\Re$. 
\end{theorem}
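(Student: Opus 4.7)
The plan is to deduce both identities from Equation \eqref{Sk}, namely $Sk = \sum_{n\in\N}[k_n,k]\,k_n$, by exploiting the invertibility, boundedness, and $[\cdot,\cdot]$-self-adjointness of $S$ established just before the theorem statement.

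For \eqref{FDT2}, I would substitute $S^{-1}k$ in place of $k$ in \eqref{Sk}, which yields
\[
k \,=\, S(S^{-1}k) \,=\, \sum_{n\in\N}[k_n, S^{-1}k]\,k_n.
\]
Since $S=S^*$ with respect to $[\cdot,\cdot]$, the inverse $S^{-1}$ is likewise self-adjoint in $[\cdot,\cdot]$, and hence $[k_n, S^{-1}k] = [S^{-1}k_n, k]$, giving \eqref{FDT2}. For \eqref{FDT1}, I would instead apply the bounded operator $S^{-1}$ directly to \eqref{Sk}: by continuity $S^{-1}$ commutes with the convergent sum, so
\[
k \,=\, S^{-1}(Sk) \,=\, \sum_{n\in\N}[k_n,k]\,S^{-1}k_n.
\]

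The main obstacle is unconditional convergence: the two algebraic manipulations above only identify the values of the series, not their independence from the ordering of $\N$. For this, I would appeal to Theorem \ref{prop1}.iii), according to which $\{k_n\}_{n\in\N}$ is also a frame for the associated Hilbert space $(\Re, [\cdot,\cdot]_J)$. Using $[k_n, k] = [k_n, Jk]_J$, the coefficient sequence $([k_n,k])_{n\in\N}$ lies in $\lN$, and the classical Hilbert-space Frame Decomposition Theorem (cf.\ \cite[Theorem 5.1.6]{Chr}) guarantees unconditional $\|\cdot\|_J$-convergence of $\sum_{n\in\N}[k_n,k]\,k_n$. Since the topology of $\Re$ is by definition the $\|\cdot\|_J$-topology, and since termwise application of a bounded linear operator preserves unconditional convergence, \eqref{FDT1} converges unconditionally in $\Re$. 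Identity \eqref{FDT2} then follows by the same reasoning applied with $k$ replaced by $S^{-1}k$, completing the plan.
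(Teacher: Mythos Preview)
Your proof is correct and follows essentially the same route as the paper: both derive \eqref{FDT1} from $k=S^{-1}Sk$ and \eqref{FDT2} from $k=SS^{-1}k$ together with the $[\cdot,\cdot]$-self-adjointness of $S^{-1}$, and both handle unconditional convergence by rewriting $[k_n,k]=[k_n,Jk]_J$, invoking Theorem~\ref{prop1}.iii), and appealing to the Hilbert-space result \cite[Theorem~5.1.6]{Chr}. Your additional remark that bounded operators preserve unconditional convergence makes explicit a step the paper leaves implicit.
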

\begin{proof}
Equation \eqref{FDT1} is proved by applying \eqref{Sk} to $k=S^{-1}Sk$. 
Since the self-adjointness of $S$ implies the self-adjointness of $S^{-1}$, 
Equation \eqref{FDT2} follows from \eqref{Sk} applied to $k=S S^{-1}k$. 
For the proof of the unconditional convergence, we replace again $k$ by $Jk$, 
write $[k_n\hs,Jk]=[k_n\hs,k]_J$, invoke Theorem \ref{prop1}.iii), 
and refer to the Hilbert space case \cite[Theorem 5.1.6]{Chr}. 
\end{proof}

By Theorem \ref{prop1}, each frame $\{k_{n}\}_{n\in\N}$ for $(\Re, [\cdot,\cdot])$ gives rise to 
three other frames with slightly different frame operators. 
In the following, we will relate these frame operators to $S$ from \eqref{Sk}. 
First, consider the frame $\{Jk_{n}\}_{n\in\N}$ for $(\Re, [\cdot,\cdot])$. 
Denoting the corresponding frame operator  by $S_0$, 
we get from \eqref{Sk}
$$
S_{ 0}\hs k = \sum_{n\in\N} [Jk_n\hs,k]\hs Jk_n
= \sum_{n\in\N} [k_n\hs,Jk]\hs Jk_n,\quad k\in\Re.  
$$
Comparing this equation with \eqref{Sk} shows that the two frame operators 
are related by 
$$
S_{0} = J\hs S\hs J. 
$$
Next, let $S_1$ be the frame operator of the frame $\{k_{n}\}_{n\in\N}$ for $(\Re, [\cdot,\cdot]_J)$. 
Then, by Equation \eqref{Sh}, we have 
$$
S_1\hs k = \sum_{n\in\N} [k_n\hs,k]_J\hs k_n = \sum_{n\in\N} [k_n\hs,J k]\hs k_n, \quad k\in\Re,  
$$
and thus 
$$
 S_1= S\hs J. 
$$
Finally, with $S_2$ denoting the frame operator of the frame 
$\{Jk_{n}\}_{n\in\N}$ for $(\Re, [\cdot,\cdot]_J)$, we get from \eqref{Sh} 
$$
S_2k = \sum_{n\in\N} [Jk_n\hs,k]_J\hs J k_n = \sum_{n\in\N} [k_n\hs,k]\hs J k_n, \quad k\in\Re,  
$$
so that 
$$
S_2 = J \hs S. 
$$

Recall that, by Equation \eqref{df}, a dual frame of the frame 
$\{k_n\}_{n\in\N}$ for the Hilbert space $(\Re, [\cdot,\cdot]_J)$
is a frame $\{g_n\}_{n\in\N}$ for $(\Re, [\cdot,\cdot]_J)$ 
satisfying 
$$
k = \sum_{n\in\N} [g_n\hs,k]_J\hs  k_n,  \quad \text{for all}\ k\in\Re. 
$$
We will now state an analogous definition for Krein spaces and then 
describe dual frames 
in terms of the fundamental symmetry $J$ and the frame operator $S$. 
These dual  frames are called \emph{canonical} dual frames. 

\begin{definition}
Let $\{k_{n}\}_{n\in\N}$ be a frame for the Krein space $\Re$. 
A frame $\{h_n\}_{n\in\N}$ for $\Re$ is called a 
\emph{dual frame} of $\{k_{n}\}_{n\in\N}$ if 
$$
k = \sum_{n\in\N} [h_n\hs,k]\hs  k_n,  
$$
for all $k\in\Re$. 
\end{definition}
\begin{proposition} \label{DF}
Let $S$ be the frame operator of the frame 
$\{k_{n}\}_{n\in\N}$  for the Krein space $\Re$. 
Then 
\begin{itemize}
\item[\textit{i)}] $\{S^{-1}k_n\}$ is a dual frame of $\{k_{n}\}_{n\in\N}$  for the Krein space $\Re$. 
\item[\textit{ii)}] $\{JS^{-1}k_n\}$ is a dual frame of $\{Jk_{n}\}_{n\in\N}$  for the Krein space $\Re$. 
\item[\textit{iii)}] $\{JS^{-1}k_n\}$ is a dual frame of $\{k_{n}\}_{n\in\N}$  for the Hilbert space $\left(\Re, [\cdot,\cdot]_{J}\right)$. 
\item[\textit{iv)}] $\{S^{-1}k_n\}$ is a dual frame of $\{Jk_{n}\}_{n\in\N}$  for the Hilbert space $\left(\Re, [\cdot,\cdot]_{J}\right)$. 
\end{itemize}
If $0<A\leq B<\infty$ are frame constants for $\{k_{n}\}_{n\in\N}$, 
then all these dual frames admit the frame constants $0< B^{-1}\leq A^{-1}<\infty$. 
\end{proposition}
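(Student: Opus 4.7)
The plan is to derive all four duality identities directly from the Frame Decomposition Theorem (Theorem \ref{THM}) combined with the elementary properties of the fundamental symmetry: $J^2=1$, self-adjointness of $J$ with respect to both $[\cdot,\cdot]$ and $[\cdot,\cdot]_J$, and the defining identity $[h,k]_J=[h,Jk]$. Each of the four statements asserts an expansion in which $k$ is written as a weighted series in either $k_n$ or $Jk_n$; after using the relation between the two inner products and shifting a $J$ through a self-adjoint pairing, every such expansion reduces to either \eqref{FDT1} or \eqref{FDT2}.

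Concretely, part i) is exactly \eqref{FDT2}. For part ii), I rewrite $[JS^{-1}k_n,k]=[S^{-1}k_n,Jk]$ using $J^*=J$, factor $J$ out of the sum, and apply \eqref{FDT2} to $Jk$. For part iii), the chain $[JS^{-1}k_n,k]_J=[JS^{-1}k_n,Jk]=[S^{-1}k_n,k]$ (using $J^2=1$) reduces the desired expansion directly to \eqref{FDT2}. For part iv), the analogous rewriting $[S^{-1}k_n,k]_J=[S^{-1}k_n,Jk]$ followed by pulling out $J$ reduces the claim to \eqref{FDT2} applied to $Jk$.

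To verify that each candidate is genuinely a frame with bounds $B^{-1}\leq A^{-1}$, I would use the identity $\sum_n[S^{-1}k_n,k]\,S^{-1}k_n = S^{-1}\sum_n[S^{-1}k_n,k]\,k_n = S^{-1}k$, which follows from \eqref{Sk} and \eqref{FDT2} and shows that $S^{-1}$ is itself the frame operator of $\{S^{-1}k_n\}$ in $\Re$. Therefore $\sum_n|[S^{-1}k_n,k]|^2=[k,S^{-1}k]$, and the two-sided bound $B^{-1}J\leq S^{-1}\leq A^{-1}J$ obtained in the discussion preceding Theorem \ref{THM} yields the frame inequalities for $\{S^{-1}k_n\}$ in $\Re$. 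The remaining three frame checks reduce to the same computation after replacing $k$ by $Jk$ and invoking $\|Jk\|_J=\|k\|_J$ (unitarity of $J$ on $(\Re,[\cdot,\cdot]_J)$), together with Theorem \ref{prop1} to translate between Krein-space and Hilbert-space frames.

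The proof is essentially algebraic manipulation, so the main obstacle is not conceptual but bookkeeping: keeping track of which inner product appears in each of the four dual-frame definitions, and positioning the $J$-factors so that self-adjointness or $J^2=1$ can be applied to reach the form of \eqref{FDT1} or \eqref{FDT2}. Organizing the four cases in parallel, with explicit intermediate formulas, should make the argument transparent and uniform.
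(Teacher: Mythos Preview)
Your proposal is correct. The duality identities in i)--iv) are obtained exactly as in the paper: all reduce to \eqref{FDT2} after shifting $J$-factors through the pairing, and the paper's treatment of ii) via the frame operator $S_0=JSJ$ amounts to the same computation you write out explicitly.

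The only genuine difference is in how you verify the frame bounds $B^{-1}\leq A^{-1}$. The paper routes this through the Hilbert-space theory: it identifies $S_1=SJ$ as the frame operator of $\{k_n\}$ in $(\Re,[\cdot,\cdot]_J)$, invokes \cite[Lemma~5.1.5]{Chr} to get that $\{S_1^{-1}k_n\}=\{JS^{-1}k_n\}$ is a dual frame with the stated bounds, and then appeals to Theorem~\ref{prop1} to transfer frame-ness to the other three sequences. You instead observe directly that the Krein-space frame operator of $\{S^{-1}k_n\}$ is $S^{-1}$ itself, so that $\sum_n|[S^{-1}k_n,k]|^2=[k,S^{-1}k]$, and read off the bounds from the operator inequality $B^{-1}J\leq S^{-1}\leq A^{-1}J$ already established before Theorem~\ref{THM}. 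Your argument is more self-contained (no external citation needed) and stays entirely within the Krein-space formalism, which is in keeping with the paper's stated aim; the paper's version is shorter because it delegates the work to a known result.
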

\begin{proof}
First, since $S_1=S J$ is the frame operator of  $\{k_{n}\}_{n\in\N}$  for  
$\left(\Re, [\cdot,\cdot]_{J}\right)$ 
with inverse  $S_1^{-1}\hsp=\hsp J S^{-1}$, 
it follows from the Hilbert space theory \cite[Lemma 5.1.5]{Chr} that  
$\{S_1^{-1}k_{n}\}_{n\in\N}= \{J S^{-1} k_{n}\}_{n\in\N}$ is a dual frame of $\{k_{n}\}_{n\in\N}$ 
for $\left(\Re, [\cdot,\cdot]_{J}\right)$ 
with frame constants $0< B^{-1}\leq A^{-1}<\infty$. This proves iii). Moreover, 
by Theorem~\ref{prop1}, all the other dual frames given in the 
proposition are indeed frames for the corresponding spaces and admit the same frame constants. 
Now i) is a direct consequence of \eqref{FDT2}, and ii) follows by inserting the frame $\{Jk_{n}\}_{n\in\N}$ 
and the inverse frame operator $S_0^{-1}=JS^{-1}J$ into \eqref{FDT2}. Finally iii) implies iv) 
by applying the unitary transformation $J$ to the Hilbert space $\left(\Re, [\cdot,\cdot]_{J}\right)$. 
\end{proof}

Let us also discuss the situation of tight frames 
with normalized elements. The next proposition 
shows that these frames are 
actually $J$-orthonormalized bases. 
\begin{proposition} \label{tf}
Let $\{k_{n}\}_{n\in\fN}$ be a tight frame for the Krein space $\Re$ 
with frame bounds $A=B=1$. 
Assume that $|[k_n,k_n]|=1$ for all $n\in\fN$. 
Then $\{k_{n}\}_{n\in\fN}$ is a $J$-orthonormalized basis for $\Re$. 
\end{proposition}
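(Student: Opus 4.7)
The plan is to exploit the two Parseval identities simultaneously available: the one coming directly from the definition of a tight Krein frame with $A=B=1$, namely $\|k\|_J^2=\sum_n|[k_n,k]|^2$, and the Hilbert space Parseval identity $\|k\|_J^2=\sum_n|[k_n,k]_J|^2$ obtained by invoking Theorem~\ref{prop1}.iii), which tells us that $\{k_n\}_{n\in\fN}$ is simultaneously a Parseval frame for $(\Re,[\cdot,\cdot]_J)$. Setting $k=k_m$ in each identity will pin down $\|k_m\|_J$, after which orthogonality drops out.

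First I would substitute $k=k_m$ into the Krein Parseval identity: isolating the $n=m$ term gives $\|k_m\|_J^2=|[k_m,k_m]|^2+\sum_{n\ne m}|[k_n,k_m]|^2=1+\sum_{n\ne m}|[k_n,k_m]|^2\ge 1$, using the hypothesis $|[k_m,k_m]|=1$. Next I would substitute $k=k_m$ into the Hilbert space Parseval identity and isolate the $n=m$ term using $[k_m,k_m]_J=\|k_m\|_J^2$, obtaining $\|k_m\|_J^2\ge\|k_m\|_J^4$, which forces $\|k_m\|_J\le 1$ (the case $\|k_m\|_J=0$ is excluded since it would contradict $|[k_m,k_m]|=1$). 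Combining both inequalities yields $\|k_m\|_J=1$ for every $m$, and feeding this back into the first computation gives $\sum_{n\ne m}|[k_n,k_m]|^2=0$, hence $[k_n,k_m]=0$ whenever $n\ne m$. Completeness is for free: if $k\in\Re$ satisfies $[k_n,k]=0$ for all $n$, then the Krein Parseval identity yields $\|k\|_J^2=0$, so $k=0$. Combined with the hypothesis $|[k_n,k_n]|=1$, this verifies all three conditions of the definition of a $J$-orthonormal basis given in Section~\ref{KS}.

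The main conceptual obstacle is recognizing that the hypothesis $|[k_n,k_n]|=1$ is \emph{not} the same as $\|k_n\|_J=1$ (in a Krein space the two quantities can differ: for instance an element $k_n=k_n^++k_n^-$ with large positive and negative components can have $|[k_n,k_n]|=1$ while $\|k_n\|_J$ is much greater than~$1$). The classical Hilbert space proof therefore cannot be applied verbatim; the trick is that the Krein Parseval identity supplies the \emph{lower} bound $\|k_m\|_J^2\ge 1$, while the Hilbert Parseval identity supplies the \emph{upper} bound $\|k_m\|_J^2\le 1$, and only their simultaneous use closes the gap. Once $\|k_m\|_J=1$ is established, the rest of the proof is formally identical to the Hilbert space argument that a Parseval frame of unit vectors is an orthonormal basis.
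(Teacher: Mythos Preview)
Your proof is correct and takes a genuinely different route from the paper. The paper never invokes the second (Hilbert) Parseval identity; instead it decomposes each $k_m=k_m^+ + k_m^-$ along the fundamental decomposition $\Re_+\oplus\Re_-$ and applies the \emph{single} Krein Parseval identity to the vector $k_m^+$ (not to $k_m$). From $[k_m^+,k_m^+]\ge 1$ and $[k_m^+,k_m^+]=|[k_m^+,k_m^+]|^2+\sum_{n\ne m}|[k_m^+,k_n]|^2$ it deduces $[k_m^+,k_m^+]=1$, hence $k_m^-=0$, so each frame element lies entirely in $\Re_+$ or $\Re_-$ and orthogonality follows. Your argument, by contrast, leaves the fundamental projections untouched and instead squeezes $\|k_m\|_J$ between two Parseval identities, the Krein one giving $\|k_m\|_J^2\ge 1$ and the Hilbert one (imported via Theorem~\ref{prop1}) giving $\|k_m\|_J^2\le 1$. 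Your approach is a bit more economical and makes explicit use of the equivalence in Theorem~\ref{prop1}; the paper's approach has the minor bonus of directly exhibiting that each $k_m$ is a purely positive or purely negative vector, though this also follows a posteriori from your conclusion $\|k_m\|_J=1$ combined with $|[k_m,k_m]|=1$.
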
 
\begin{proof}
For $n\in\fN$, write  
$k_n= P_+ k_n +P_-k_n=: k^+_n +k^-_n\in\Re_+\oplus\Re_-$,  
where $P_+$ and $P_-$ denote the fundamental projections of \eqref{P}. 
Set $\fN_+:=\{n\in\fN : [k_n,k_n] =1\}$ and $\fN_-:=\{n\in\fN : [k_n,k_n] =-1\}$.  
Let $m\in \fN_+$.  
Then 
$$
1= [k^+_m,k^+_m] + [k^-_m,k^-_m] \leq [k^+_m,k^+_m].
$$ 
Inserting $k:=k^+_m$ and  $A=B=1$ into \eqref{m1} gives 
\[ \label{1}
[k^+_m,k^+_m] =  \left|\left[k^+_m,k^+_{m}\right]\right|^{2} + 
\sum_{n\in \fN{\setminus}\{m\}}\left|\left[k^+_m,k_{n}\right]\right|^{2} . 
\] 
If we had $[k^+_m,k^+_m]> 1$, we would get a contradiction. 
Hence $[k^+_m,k^+_m]=1$ and therefore $k_m^-=0$. 
Now \eqref{1} implies that $[k_m,k_n]=[k^+_m,k_n] =0$ 
for all $n\in\fN$, \,$n\neq m$. 
Replacing $(\Re, [\cdot\hs,\cdot])$ by $(\Re, -[\cdot\hs,\cdot])$ shows 
that the same holds for all $m\in \fN_-$. 
The rest of the proof is routine. 
\end{proof}

The following proposition establishes a link to $J$-frames as defined in \cite{GMMM}. 
We will explain this after the proof.

\begin{proposition}  \label{link}
Let $\Re$ be a Krein space with fundamental symmetry $J$, and let $P$ be an 
orthogonal projection commuting with $J$. 

If $\{k_{n}\}_{n\in\N}$ is a frame for $\Re$ with frame bounds $A\leq B$, 
then $\{Pk_{n}\}_{n\in\N}$ is a frame for $P\Re$ and 
$\{(1-P)k_{n}\}_{n\in\N}$ is a frame for $(1-P)\Re$,  both admitting the same frame bounds. 

Conversely, if $\{k_{n}^+\}_{n\in\fN_+}$ is a frame for $P\Re$ and 
$\{k_{n}^-\}_{n\in\fN_-}$ is one for $(1-P)\Re$, both with frame bounds  $A\leq B$, 
then $\{k_{n}^+\}_{n\in\fN_+}\cup \{k_{n}^-\}_{n\in\fN_-}$
is a frame for $\Re$ ad\-mit\-ting the same frame bounds. 
%\begin{itemize}
%\item[\textit{i)}] If $\{k_{n}\}_{n\in\N}$ is a frame for $\Re$ with frame bounds $A\leq B$, 
%then $\{Pk_{n}\}_{n\in\N}$ is a frame for $P\Re$ and 
%$\{(1-P)k_{n}\}_{n\in\N}$ is a frame for $(1-P)\Re$,  both admitting the same frame bounds. 
%\item[\textit{ii)}] If $\{k_{n}^+\}_{n\in\N}$ is a frame for $P\Re$ and 
%$\{k_{n}^-\}_{n\in\N}$ is a frame for $(1-P)\Re$, both with frame bounds  $A\leq B$, 
%then $\{k_{n}^+ + k_{n}^- \}_{n\in\N}$ and $\{k_{n}^+\}_{n\in\N}\cup \{k_{n}^-\}_{n\in\N}$
%are frames for $\Re$ with frame bounds $A\leq B$. 
%\end{itemize}
\end{proposition}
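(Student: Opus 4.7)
The plan is to exploit that $PJ = JP$ forces $P$ to be self-adjoint with respect to \emph{both} inner products on $\Re$: self-adjointness on $(\Re, [\cdot,\cdot]_J)$ is part of the hypothesis ``orthogonal projection'', and a one-line calculation $[Pk,h] = [Pk, Jh]_J = [k, PJh]_J = [k, JPh]_J = [k, Ph]$ then transfers it to $[\cdot,\cdot]$. Combined with $J$-invariance of $P\Re$ and $(1-P)\Re$, this gives both subspaces a Krein structure whose $J$-norms coincide with the restrictions of $\|\cdot\|_J$, makes the decomposition $\Re = P\Re \oplus (1-P)\Re$ orthogonal with respect to both inner products, and provides the Pythagorean identity $\|k\|_J^2 = \|Pk\|_J^2 + \|(1-P)k\|_J^2$.

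With this setup the forward direction is a one-step reduction: for $k \in P\Re$ the identity $Pk = k$ and self-adjointness of $P$ give $[Pk_n, k] = [k_n, Pk] = [k_n, k]$, so the frame inequalities \eqref{m1} for $\{k_n\}_{n\in\N}$, when restricted to $k \in P\Re$, become precisely the frame inequalities for $\{Pk_n\}_{n\in\N}$ on $P\Re$ with unchanged bounds $A \leq B$. The argument for $\{(1-P)k_n\}_{n\in\N}$ on $(1-P)\Re$ is identical.

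For the converse I would split $k = Pk + (1-P)k$ and use the $[\cdot,\cdot]$-orthogonality of the two subspaces (together with $Pk_n^+ = k_n^+$, $(1-P)k_n^- = k_n^-$) to obtain $[k_n^+, k] = [k_n^+, Pk]$ and $[k_n^-, k] = [k_n^-, (1-P)k]$. Applying the two hypothesized frame inequalities on $P\Re$ and $(1-P)\Re$ to $Pk$ and $(1-P)k$ respectively and summing, Pythagoras collapses $\|Pk\|_J^2 + \|(1-P)k\|_J^2$ to $\|k\|_J^2$, yielding the claimed frame bounds $A \leq B$ for the union $\{k_n^+\}_{n\in\fN_+} \cup \{k_n^-\}_{n\in\fN_-}$ (whose index set is still countable). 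No serious obstacle is expected; the only mildly subtle point is the double self-adjointness of $P$, but as noted above it is a one-line consequence of $PJ = JP$, after which everything is essentially bookkeeping.
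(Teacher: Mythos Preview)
Your proposal is correct and follows essentially the same route as the paper: both arguments hinge on $P$ being self-adjoint with respect to $[\cdot,\cdot]$ (which you verify explicitly, while the paper simply invokes it), the identity $[Pk_n,k]=[k_n,k]$ for $k\in P\Re$ in the forward direction, and in the converse the orthogonal splitting $k=Pk+(1-P)k$ together with the Pythagorean identity $\|k\|_J^2=\|Pk\|_J^2+\|(1-P)k\|_J^2$. Your extra remark that $PJ=JP$ is what transfers self-adjointness from $[\cdot,\cdot]_J$ to $[\cdot,\cdot]$ is a welcome clarification that the paper leaves implicit.
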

\begin{proof}
Since $P$ commutes with $J$, the subspaces $P\Re$ and $(1-P)\Re$ of $\Re$ 
are Krein spaces with fundamental symmetry $PJ$ and $(1-P)J$, respectively. 
Given a frame $\{k_{n}\}_{n\in\fN}$ for $\Re$ with frame bounds $A\leq B$, 
we have for all $k \in P\Re$
$$
A\| k \|_J^2 = A\| P k \|_J^2
\leq \sum_{n\in \N}\left|\left[Pk,k_{n}\right]\right|^{2}
=  \sum_{n\in \N}\left|\left[ k,P k_{n}\right]\right|^{2}
%=  \sum_{n\in \N}\left|\left[ k^+\hsp,k_{n}^+\right]\right|^{2}\\[2pt]
\leq B\| P k \|_J^2 = B\| k \|_J^2,
$$
hence $\{Pk_{n}\}_{n\in\N}$ is a frame for $P\Re$ with frame bounds $A\leq B$. 
The same remains true for $P$ replaced by $1-P$. 
This completes the prove of i). 

Now let $\{k_{n}^+\}_{n\in \fN_+}$ and $\{k_{n}^-\}_{n\in \fN_-}$ be two frames 
satisfying the assumptions stated in the proposition. 
For $k\in\Re$, set $k^+:= Pk$ and $k^-:=(1-P)k$. 
Note that $[k,k_{n}^+] =[k,Pk_{n}^+] =[Pk,k_{n}^+] = [k^+,k_{n}^+]$ and,  
similarly, $[k,k_{n}^-]= [k^-,k_{n}^-]$. 
From $PJ=JP$, it follows that 
$\|k\|_J^2=\|k^+\|_J^2 + \|k^-\|_J^2$. 
Therefore 
\begin{align*}
A\|k\|_J^2& \,=\,  A\|k^+\|_J^2 + A \|k^-\|_J^2\\
&\,\leq\, \sum_{n\in \fN_+} \left|\left[k^+,k_{n}^+\right]\right|^{2} + 
\sum_{n\in \fN_-} \left|\left[k^-,k_{n}^-\right]\right|^{2} 
=\sum_{n\in \fN_+} \left|\left[k,k_{n}^+\right]\right|^{2} + 
\sum_{n\in \fN_-} \left|\left[k,k_{n}^-\right]\right|^{2} \\
&\,\leq\, B\|k^+\|_J^2 + B\|k^-\|_J^2 
\,=\, B \|k\|_J^2, 
\end{align*}
which finishes the proof.
\end{proof}

The definition of a $J$-frame given in \cite{GMMM} can be rephrased as 
follows: Given a Bessel sequence $\{f_n\}_{n\in\N}$ in 
$(\Re, [\cdot,\cdot]_J)$ (i.e., 
$\{f_n\}_{n\in\N}$  satisfies the upper bound condition in \eqref{fH}), 
consider the inner product on $\kN$  given by 
$$
  [e_n,e_n] := \left\{    
\begin{array}{rc}
1 & \text{if} \ \  [f_n,f_n]\,\geq\, 0,\\[6pt]
-1 & \text{if} \ \  [f_n,f_n] \,<\, 0,
\end{array}          
\right.   
$$
where $\{e_n\}_{n\in\N}$ denotes the standard basis of the sequence space $\kN$. 
Let $\tilde J$ denote the fundamental symmetry of $(\kN, [\cdot,\cdot])$ 
such that $[\cdot,\cdot]_{\tilde J}$ becomes the standard inner product  $\ip{\cdot}$ on $\lN$. 
Then the fundamental projection  $P_+=\frac{1}{2}(1+{\tilde J})$ projects onto the closed subspace 
generated by those $e_n$ satisfying $[e_n\hs ,e_n]=1$. Now, $\{f_n\}_{n\in\N}$ is 
a  \emph{$J$-frame} if the ranges $\ran(TP_+)$ and $\ran(T(1-P_+))$ are 
maximal uniformly $J$-positive and $J$-negative subspaces of $\Re$, respectively, 
where $T$ denotes the pre-frame operator \eqref{T}. 
In \cite[Example 3.3]{GMMM}, it was shown that not every frame in $\Re$ is a $J$-frame. 
As a consequence, Definition \ref{fK} and the definition of a $J$-frame are not equivalent. 
However, by Proposition \ref{link}, 
each frame $\{f_n\}_{n\in\N}$ in $\Re$ gives rise to a $J$-frame 
by considering $\{\frac{1}{2}(1+ J) f_n\}_{n\in\N} \cup \{\frac{1}{2}(1-J)f_n\}_{n\in\N}$ 
and omitting zero elements if required.  

We close this section with a few remarks concerning examples. 
Let $(\h,\ip{\cdot})$ be a Hilbert space 
and $J$ a bounded linear operator on $\h$ satisfying $J^2= 1$ and $J^* =J$. 
Define $[h,k]:= \ip[h]{Jk}$ for all $h,k\in\h$. 
Then $(\h, [\cdot\hs,\cdot])$ is a Krein space with fundamental symmetry $J$. 
Note that, by considering the associated Hilbert space with 
inner product $[\cdot\hs,\cdot]_J$, 
each Krein space is of this type. 
By Theorem~\ref{prop1}, all frames for the Krein space $(\h, [\cdot\hs,\cdot])$ 
are obtained from frames for the Hilbert space $(\h,\ip{\cdot})$. 

Our interest in frames for Krein spaces originates from  
${L}_2$-spaces $\L_2(\Omega,\mu)$. If $\mu$ is 
a positive measure on a $\sigma$-algebra over $\Omega$ 
and $\varphi$ is a measurable real function such that 
$0< \mathrm{ess\;inf}\hs  |\varphi | \leq    \mathrm{ess\;sup}\hs |\varphi | <\infty$, 
then 
\[ \label{L2}
[f,g]:= \int \bar f \hs g\hs \varphi\hs \dd\mu,\quad f,g\in \L_2(\Omega,\mu), 
\]
defines an (indefinite) 
inner product such that $(\L_2(\Omega,\mu), [\cdot\hs,\cdot])$ 
becomes a Krein space with fundamental symmetry $J$ given by the 
multiplication by the sign function of $\varphi$. 

Now, if we allow  $\mathrm{ess\;inf}\hs  |\varphi | =0$ 
(but require without loss of generality that $\mu \{\varphi=0\}=0$), 
then $(\L_2(\Omega,\mu), [\cdot\hs,\cdot]_J)$ is not complete. 
In order to obtain a Krein space with the inner product determined by \eqref{L2}, 
one has to take a completion. 
On the dense subspace $\L_2(\Omega,\mu)$, 
the positive inner product is then given  by 
$$
[f,g]_J= \int \bar f \hs g\hs |\varphi |\hs \dd\mu. 
$$
By Theorem \ref{prop1}, we may apply the frame theory either to the 
Krein space or to the associated Hilbert space. However, by passing 
from $\varphi$ to $|\varphi|$, one might loose some desired properties of $\varphi$. 
For instance, if $\varphi$ is differentiable, $|\varphi|$ does not nessarily have to be so. 
Therefore we prefer to work in the Krein space. 

On an abstract level, we may consider the multiplication by $\varphi$ as an operator 
on $\L_2(\Omega,\mu)$, 
\[ \label{Wphi}
(W_\varphi f)(x):=\varphi(x)\hs f(x). 
\]
The operator $W_\varphi$ is unbounded, if $\mathrm{ess\;sup}\hs |\varphi | =\infty$, and self-adjoint, 
if we set $\dom(W_\varphi) :=\{ f\in \L_2(\Omega,\mu) : \int |f|^2 |\varphi|^2\hs \dd\mu<\infty\}$. 
With $\ip{\cdot}$ denoting the standard inner product on $\L_2(\Omega,\mu)$, Equation \eqref{L2} reads 
$$
[f,g] =\ip[f]{W_\varphi\hs g}, \quad f,g\in \dom(W_\varphi). 
$$ 
Furthermore, $\mathrm{ess\;inf}\hs  |\varphi | =0$ if and only if $0$ belongs to the spectrum of  $W_\varphi$. 
This is the general setting which will concern us in the next section.

\section{Frames in Hilbert spaces with $W$-metric}
\label{Sec-W}

The objective of this section is to show how to transfer a frame for a Hilbert space $\h$ to 
a non-regular Krein space $\h_W$. Although, on a technical level, we could treat the bounded 
and the unbounded case at once, we start by distinguishing between the following situations: 
First we suppose that the Gram operator $W$ is bounded to illustrate the completion process 
enforced by $0\in \spec(W)$. Second we allow $W$ to be an unbounded  
but assume that $0\notin \spec(W)$. 
The difference between these two cases is that in the latter, we can identify $\h_W$ 
with a subspace of $\H$, and in the former, we have to extend $\H$ by taking a completion. 
The general case will be obtained by combining both situations. 

Throughout this section, $(\h,\ip{\cdot}) $ stands for a separable Hilbert space,  and 
$W$ denotes a self-adjoint operator with domain $\dom(W)\subset \h$ 
and polar decomposition $W= J\hs |W|$.  
We assume that $\ker(W)=\{0\}$. 
Then $J$ is  a unitary self-adjoint operator. 
The letter $E$ will be used for the projection-valued measure on the 
Borel $\sigma$-algebra $\Sigma(\R)$ such that 
\[ \label{WE} 
        W= \int \lambda \, \dd E(\lambda). 
\]
Analogous to  \eqref{ipW}, we define 
\[ \label{fg}
        [f,g]:= \ip[f]{W\hs g},\quad f,\, g \in \dom(W). 
\] 
In this way, $\dom(W)$ becomes a decomposable non-degenerate inner product space 
with fundamental decomposition 
$\dom(W) = \D_+\oplus \D_-$ and fundamental symmetry $J$, where 
\[
\D_+\hsp:=\hsp E(0,\infty)\hs\dom(W),\ \ \D_-\hsp:=\hsp E(-\infty,0)\hs\dom(W),\ \ 
J\hsp =\hsp E(0,\infty)  \hsp-\hsp E(-\infty,0). 
\]
Here, $\ker(W)=\{0\}$ is necessary since otherwise $\dom(W)$ would be degenerate. 
From $J^2 = 1$, the polar decomposition $W= J\hs |W|$ and Equation \eqref{fg}, it follows that 
\[  \label{Jip}
 [f,g]_J:= \ip[f]{|W|\hs g},\quad f,\, g \in \dom(W). 
\]
Taking the closure under the norm defined by $[\cdot\hs,\cdot]_J$ and 
extending  $J$ to the closure (without changing the notation), we obtain a Krein space 
$(\h_W, [\cdot\hs,\cdot])$ 
with fundamental symmetry $J$ and fundamental decomposition 
$\h_W = \h_+ \oplus \h_-$
such that $\D_+$ and $\D_-$ are dense in $\h_+$ and $\h_-$, respectively. 

Recall from the end of Section \ref{KS} that, for a regular 
Gram operator,
$\ip{\cdot}$ and $[\cdot\hs ,\cdot]_J$ define equivalent norms on $\h$. 
It follows from \eqref{fH} 
(for instance, by applying the restriction 
$\| |W|^{-1}\|^{-1}\leq \lambda_1 < \lambda_2\leq \|W\|$ in the next proof)
that equivalent norms admit the same set of frames.  
Hence, by Theorem \ref{prop1}, any frame for $(\h,\ip{\cdot}) $ yields one for 
$(\h_W, [\cdot\hs,\cdot])$ and vice versa. 
The next proposition tells us that the same cannot hold for Gram operators 
which are not regular or unbounded. 

\begin{proposition} \label{not}
Let $(\h_W, [\cdot\hs,\cdot])$ denote the Krein space described above. 
\begin{itemize}
\item[\textit{i)}] If \hs$W$ is bounded and $0\hsp\in\hsp \spec(W)$, then 
any frame $\{f_n\}_{n\in\N}$ for $(\h,\ip{\cdot})$ 
is not a frame for $(\h_W, [\cdot\hs,\cdot])$. 
\item[\textit{ii)}]
If \hs$W$ is unbounded, then any frame $\{f_n\}_{n\in\N}\hsp\subset\hsp \dom(W)$ for $(\h,\ip{\cdot})$ 
is not a frame for $(\h_W, [\cdot\hs,\cdot])$. 
\end{itemize}
\end{proposition}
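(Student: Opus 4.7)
The plan is to exploit the identity $[f_n, v] = \ip[f_n]{Wv}$, valid for any $v \in \dom(W) \subset \h_W$ and any $f_n \in \dom(W)$, which converts the Krein frame sum $\sum_n |[f_n, v]|^2$ into a Hilbert-space frame sum applied to $Wv \in \h$. Applying the Hilbert frame inequalities \eqref{fH} for $\{f_n\}_{n\in\N}$ with bounds $A_\h \leq B_\h$ to the vector $Wv$ yields
\begin{equation*}
A_\h \|Wv\|^2 \,\leq\, \sum_{n\in\N} |[f_n, v]|^2 \,\leq\, B_\h \|Wv\|^2 \qquad (v \in \dom(W)).
\end{equation*}
Combined with $\|v\|_J^2 = \ip[v]{|W|v}$ and $W^2 = |W|^2$, a Krein-frame relation for $\{f_n\}_{n\in\N}$ with constants $A \leq B$ would force the two-sided operator inequality $(A/B_\h)\,|W| \leq |W|^2 \leq (B/A_\h)\,|W|$ as quadratic forms on $\dom(W)$. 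The two parts of the statement correspond to the failure of the upper, resp.\ lower, half of this sandwich by choosing spectrally localized test vectors.

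For case ii), since $W$ is unbounded we may pick, for each $N\in\N$, a value $\lambda_N \in \spec(W)$ with $|\lambda_N| > N$ together with a unit vector $v_N$ in the range of the spectral projection $E([\lambda_N - 1, \lambda_N + 1])$, which is nonzero by the very definition of the spectrum. Then $v_N \in \dom(W) \subset \h_W$, and the functional calculus yields $\|Wv_N\|^2 \geq (|\lambda_N| - 1)^2$ together with $\|v_N\|_J^2 \leq |\lambda_N| + 1$, so
\begin{equation*}
\frac{\sum_{n\in\N} |[f_n, v_N]|^2}{\|v_N\|_J^2} \,\geq\, \frac{A_\h (|\lambda_N|-1)^2}{|\lambda_N|+1} \,\to\, \infty,
\end{equation*}
violating any finite upper frame bound $B$ in \eqref{m1}. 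For case i), the hypotheses $0 \in \spec(W)$ and $\ker(W) = \{0\}$ (so that $E(\{0\}) = 0$), combined with $\sigma$-additivity of the spectral measure, force via a countable covering by the dyadic annuli $I_k := [2^{-k-1}, 2^{-k}] \cup [-2^{-k}, -2^{-k-1}]$ the existence of a subsequence $k_N \to \infty$ with $E(I_{k_N}) \neq 0$ (otherwise $E$ would vanish on a punctured neighborhood of $0$ and hence, using $E(\{0\})=0$, on a full neighborhood of $0$, contradicting $0 \in \spec(W)$). Picking a unit vector $v_N$ in the range of $E(I_{k_N})$, the spectral theorem gives $\|Wv_N\|^2 \leq 2^{-2k_N}$ and $\|v_N\|_J^2 \geq 2^{-k_N - 1}$, so
\begin{equation*}
\frac{\sum_{n\in\N} |[f_n, v_N]|^2}{\|v_N\|_J^2} \,\leq\, \frac{B_\h\, 2^{-2k_N}}{2^{-k_N-1}} \,=\, 2 B_\h \, 2^{-k_N} \,\to\, 0,
\end{equation*}
which rules out any positive lower frame bound $A$.

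The main obstacle is the spectral selection in case i): the test vectors must be supported in annuli thin enough so that $\|Wv_N\|$ is small, yet whose distance from the origin is comparable to their width, so that $\|v_N\|_J$ does not collapse faster than $\|Wv_N\|$. The assumption $\ker W = \{0\}$ plays the essential role here, since by excluding a spectral atom at $0$ it forces the spectral mass near $0$ to migrate into precisely such annuli; otherwise one could not simultaneously control both norms, and the argument would collapse into a trivial estimate.
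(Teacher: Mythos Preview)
Your proof is correct and follows essentially the same route as the paper: both exploit the identity $[f_n,v]=\ip[f_n]{Wv}$ to convert the Krein frame sum into $\|Wv\|^2$ up to the Hilbert frame constants, and then choose spectrally localized test vectors to drive the ratio $\|Wv\|^2/\|v\|_J^2$ to $0$ (case~i) or to $\infty$ (case~ii). The only cosmetic differences are that the paper works with the spectral measure of $|W|$ and normalizes via $g=\sqrt{|W|}^{\,-1}h$ so that $\|g\|_J=1$ from the outset, whereas you work with the spectral measure of $W$, take $\|v_N\|=1$ in $\h$, and use explicit dyadic annuli and unit-width windows instead of generic intervals~$[\lambda_1,\lambda_2]$.
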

\begin{proof} Using the spectral theorem, we write 
$|W|=\int_{[0,\infty)} \lambda\, \dd F(\lambda)$, 
where $F$ denotes the corresponding projection valued measure. 
Given  $\lambda_2>\lambda_1>0$ such that $F([\lambda_1, \lambda_2])\neq 0$, choose 
$h\in F([\lambda_1, \lambda_2])\h$ with $\|h\|=1$. Then $h$ belongs to the domain of 
$\sqrt{|W|}^{\hs -1}$ so that $g:=\sqrt{|W|}^{\hs -1} h$ is well defined.  
From \eqref{Jip}, we get 
\[  \label{gnorm}
\|g\|_J^2=\ip[g]{|W|g}=\ip[h]{h}=1.
\] 
Furthermore, by the spectral theorem,
\[ \label{ineq}
%\lambda_1 \|g\|_J^2 =
\lambda_1  \,\leq\,
\int_{[\lambda_1,\lambda_2]} \hspace{-14pt}\lambda \;\dd\ip[h\hs]{F(\lambda)\hs h}
\,=\,\|\sqrt{|W|}\hs h\|^2 \,=\, \|\hs|W| g\hs\|^2 
\,\leq\, \lambda_2. 
%= \lambda_2 \|g\|_J^2. 
\]

Now let $\{f_n\}_{n\in\N}$ be a frame for $(\h,\ip{\cdot})$ with frame bounds $0<A\leq B<\infty$. 
Suppose that  $0\hsp\in\hsp \spec(W)$.  Since $0$ is not an eigenvalue of $W$, there exists for each 
$\lambda_2>0$ a $\lambda_1\in(0,\lambda_2)$ such that $F([\lambda_1, \lambda_2])\neq 0$. 
Using Equations \eqref{fH}, \eqref{gnorm} and \eqref{ineq}, we get for $g$ as above 
\[  \label{notB}
\sum_{n\in \N}\left|\left[f_{n},g\right]\right|^{2} = \sum_{n\in \N}\left|\ip[f_{n}]{Wg}\right|^{2} 
\leq B\hs \|Wg\|^2 = B\hs \|\hs |W|\hs g\hs\|^2 \leq B \lambda_2 \|g\|_J^2, 
\]
and taking the limit $\lambda_2 \ra 0$ shows that 
there cannot exist a lower frame bound satisfying Definition \ref{fK}. 

If the Gram operator $W$ is unbounded, then for each 
$\lambda_1>0$ we find a $\lambda_2>\lambda_1$ such that $F([\lambda_1, \lambda_2])\neq 0$.
Similarly to \eqref{notB}, we compute 
$$
\sum_{n\in \N}\left|\left[f_{n},g\right]\right|^{2} = \sum_{n\in \N}\left|\ip[f_{n}]{Wg}\right|^{2} 
\geq A\hs \|Wg\|^2 = A\hs \|\hs |W|\hs g\hs\|^2 \geq A \lambda_1 \|g\|_J^2, 
$$
and since $\lambda_1>0$ was arbitrary, an upper frame bound does not exist. 
\end{proof}

We will now show how to transfer frames for the Hilbert space $\h$ 
to frames for the Krein space $\h_W$. 
As outlined in the beginning of this section, we start by considering a bounded Gram operator. 
\begin{theorem}  \label{Wbounded}
Let $W$ be a bounded self-adjoint operator on 
the Hilbert space $\h$ such that $\ker(W)=\{0\}$. 
\begin{enumerate}
\item[\textit{i)}]
The inclusion is $\h \subset \h_W$ is an equality if and only if \,$0\notin \spec(W)$. 
\item[\textit{ii)}]
$
\sqrt{|W|} : \h \subset \h_W\lra \h, \ \,  h\longmapsto \sqrt{|W|}h
$
\,defines an isometric operator and its extension 
$$
U:=\overline{\sqrt{|W|}} : \h_W\lra \h
$$
is a unitary operator. 
\item[\textit{iii)}] $\{k_n\}_{n\in \N} \subset \h$ is a frame for the Hilbert space $\h$ 
with frame bounds $A\leq B$ if and only if 
$\{U^{-1}k_n\}_{n\in \N} \subset \h_W$ is a frame for the Krein space 
$\h_W$ with frame bounds $A\leq B$. 
\end{enumerate}
\end{theorem}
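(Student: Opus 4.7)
My plan is to tackle the three claims in order; the construction of the unitary $U$ in part (ii) does most of the real work, and parts (i) and (iii) follow softly. For part (i), I would first observe from \eqref{Jip} and the boundedness of $W$ that $\|h\|_J^2 = \ip[h]{|W|\hs h} \leq \|W\|\,\|h\|_\h^2$, so $\|\cdot\|_J$ is dominated by $\|\cdot\|_\h$ on $\h$. If $0 \notin \spec(W)$, then $|W|^{-1}$ is bounded, the reverse estimate $\|h\|_\h^2 \leq \||W|^{-1}\|\,\|h\|_J^2$ holds, and the two norms are equivalent; since $(\h,\|\cdot\|_\h)$ is complete, so is $(\h,\|\cdot\|_J)$, hence $\h_W = \h$. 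Conversely, if $\h = \h_W$, both norms make $\h$ complete and $\|\cdot\|_J$ is bounded by $\|\cdot\|_\h$, so the open mapping theorem forces the reverse bound and therefore $|W| \geq c > 0$ for some $c$, contradicting $0 \in \spec(W)$.

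For part (ii), a one-line computation using \eqref{Jip} shows that $\sqrt{|W|}:(\h,\|\cdot\|_J)\to(\h,\|\cdot\|_\h)$ is isometric, since
\begin{equation*}
\|\sqrt{|W|}\hs h\|_\h^2 = \ip[h]{|W|\hs h} = \|h\|_J^2,\qquad h\in\h.
\end{equation*}
Because $\ker(\sqrt{|W|}) = \ker(W) = \{0\}$, the self-adjoint operator $\sqrt{|W|}$ has dense range in $\h$. Hence the continuous extension $U=\overline{\sqrt{|W|}}:\h_W\to\h$ is an isometry with dense range between the Hilbert spaces $(\h_W,[\cdot\hs,\cdot]_J)$ and $(\h,\ip{\cdot})$, that is, a unitary.

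For part (iii), I would invoke Theorem \ref{prop1} to reduce the Krein frame property of $\{U^{-1}k_n\}_{n\in\N}$ in $\h_W$ to the Hilbert frame property in $(\h_W,[\cdot\hs,\cdot]_J)$, and then use that $U$ is a Hilbert space unitary by part (ii). Concretely, for every $h\in\h_W$,
\begin{equation*}
\sum_{n\in\N}\bigl|[U^{-1}k_n,h]_J\bigr|^2 = \sum_{n\in\N}\bigl|\ip[k_n]{Uh}\bigr|^2 \qquad \text{and}\qquad \|h\|_J^2=\|Uh\|_\h^2,
\end{equation*}
so the frame inequalities with the same bounds $A\leq B$ for $\{k_n\}\subset\h$ and $\{U^{-1}k_n\}\subset\h_W$ hold simultaneously as $h$ ranges over $\h_W$ and $Uh$ over $\h$. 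This gives the equivalence in both directions.

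The main obstacle is the reverse direction in part (i); an open mapping argument settles it cleanly, but an alternative hands-on proof would use the spectral measure of $|W|$ and the facts that $F(\{0\}) = 0$ while $F((0,\epsilon])\neq 0$ for every $\epsilon > 0$ to exhibit a $\|\cdot\|_J$-Cauchy sequence in $\h$ with no $\|\cdot\|_\h$-limit. Everything else is a routine unpacking of the spectral calculus and the standard fact that Hilbert-space unitaries transport frame bounds.
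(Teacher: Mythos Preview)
Your proposal is correct and follows essentially the same approach as the paper: the isometry computation and dense-range argument for (ii), and the transport of frame bounds by a unitary for (iii), are identical. For (i) the paper also uses the open mapping theorem, but takes a small detour through Proposition~\ref{not} to conclude that the norms are not equivalent when $0\in\spec(W)$; your direct argument (open mapping gives $\|\cdot\|_\h\leq C\|\cdot\|_J$, hence $|W|\geq C^{-2}$) is cleaner and avoids that dependence.
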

\begin{proof}
Item iii) is an immediate consequence of ii) since unitary operators transfer frames 
into frames with the same frame bounds. 

We next show ii). By \eqref{Jip}, we have for all $h\in\h$
\[  \label{hWh}
\left\langle\sqrt{|W|}\hs h\,, \sqrt{|W|}\hs h\right\rangle 
= \left\langle h\,,|W|\hs h\right\rangle = [h,h]_J. 
\]
Hence  $\sqrt{|W|} :\h \subset \h_W\lra \h$ is an isometry and can be extended 
to the closure $\h_W$ of $\h$. It only remains to verify that its extension 
$\overline{\sqrt{|W|}}$ is surjective. For this, it suffices to show that 
$\sqrt{|W|}\hs\h$ is dense in $\h$. 
Since $\ker(W)=\{0\}$, we have $(\sqrt{|W|}\hs\h)^{\bot}=\ker(\sqrt{|W|})=\{0\}$, 
from which the result follows. 

The equality $\h = \h_W$ for regular Gram operators $W$ has already been discussed 
before Proposition \ref{not}. 
Suppose now  that  $0\in \spec(W)$. 
As well known, the Hilbert space norm $\|\cdot\|= \sqrt{\ip{\cdot}}$ is stronger than 
$\|\cdot\|_J= \sqrt{[\cdot\hs,\cdot]_J}$ since 
$\| h\|_J^2=\ip[h]{|W|h}\leq \|W\| \hs\|h\|^2$ for all $h\in\h$. 
From Proposition \ref{not}.i), it follows that the norms are not equivalent 
because equivalent norms give rise to the same set of frames. 
Therefore $\h\neq\h_W$, since otherwise $\id : \h\ra\h_W$ would be a continuous bijection 
and would thus have a bounded inverse, contradicting the fact that the norms are not equivalent. 
\end{proof}

The following theorem treats the case when $W$ is unbounded and $0\notin \spec(W)$. 
In contrary to the above situation, the Krein space $\h_W$ can then be identified with a subspace of $\h$. 
\begin{theorem}  \label{Wunbounded}
Let $W: \dom(W) \lra \h$ be a self-adjoint operator on 
the Hilbert space $\h$ such that $0\notin \spec(W)$. Then 
\begin{enumerate}
\item[\textit{i)}] $\h_W$ can be identified with $\dom(\sqrt{|W|})\subset \h$. 
\item[\textit{ii)}] $ \sqrt{|W|}\, :\,  \dom(\sqrt{|W|} )=\h_W\,\lra\, \H$ \,is a unitary operator. 
\item[\textit{iii)}] $\{k_n\}_{n\in \N} \subset \h$ is a frame for the Hilbert space $\h$ 
with frame bounds $A\leq B$ if and only if 
$\{\sqrt{|W|}^{\hs -1}k_n\}_{n\in \N} \subset \h_W$ is a frame for the Krein space 
$\h_W$ with frame bounds $A\leq B$. 
\end{enumerate}
\end{theorem}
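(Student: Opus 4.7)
The plan is to mirror the structure of the proof of Theorem~\ref{Wbounded}, establishing (ii) and (i) in parallel and then deducing (iii) by frame transfer under unitaries. The hypothesis $0\notin\spec(W)$ implies $0\notin\spec(|W|)$, so that $\sqrt{|W|}$ is a (possibly unbounded) self-adjoint operator with a bounded, everywhere-defined inverse; in particular $\sqrt{|W|}:\dom(\sqrt{|W|})\to\h$ is a bijection. For $f\in\dom(W)\subset\dom(\sqrt{|W|})$, formula~\eqref{Jip} gives $\|f\|_J^{2}=\ip[f]{|W|\hs f}=\|\sqrt{|W|}\hs f\|^{2}$, so the restriction of $\sqrt{|W|}$ to $\dom(W)$ is isometric from $(\dom(W),\|\cdot\|_J)$ into $\h$. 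This is the crucial identity to exploit, exactly as \eqref{hWh} was exploited in the bounded case.

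Next I would identify $\h_W$ with $\dom(\sqrt{|W|})$ viewed as a subspace of $\h$. Because $\sqrt{|W|}^{\hs -1}$ is bounded, on $\dom(\sqrt{|W|})$ the norm $\|\cdot\|_J=\|\sqrt{|W|}\cdot\|$ is equivalent to the graph norm of $\sqrt{|W|}$, hence $(\dom(\sqrt{|W|}),\|\cdot\|_J)$ is already complete by closedness of $\sqrt{|W|}$. For the required density of $\dom(W)$, I would use the spectral truncations $f_n:=E([-n,n])\hs f\in\dom(W)$ for $f\in\dom(\sqrt{|W|})$; since the spectral projections of $W$ commute with $\sqrt{|W|}$, one has $\sqrt{|W|}\hs f_n=E([-n,n])\sqrt{|W|}\hs f\to\sqrt{|W|}\hs f$ in $\h$, so $\|f-f_n\|_J\to 0$. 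This proves (i), and the isometry above extends uniquely to the bijection $\sqrt{|W|}:\h_W\to\h$, giving (ii).

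For (iii), since $\sqrt{|W|}:(\h_W,[\cdot,\cdot]_J)\to(\h,\ip{\cdot})$ is unitary between Hilbert spaces and unitaries preserve the frame inequality, $\{k_n\}_{n\in\N}$ is a frame for $\h$ with bounds $A\leq B$ if and only if $\{\sqrt{|W|}^{\hs -1}k_n\}_{n\in\N}$ is a frame for the Hilbert space $(\h_W,[\cdot,\cdot]_J)$ with the same bounds. By Theorem~\ref{prop1}.iii), this is in turn equivalent to being a frame for the Krein space $\h_W$ with the same bounds, which is (iii).

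The main obstacle is the identification in (i), specifically the argument that $\dom(W)$ is a core for $\sqrt{|W|}$ with respect to the $\|\cdot\|_J$ topology; the spectral-truncation argument above is standard but depends essentially on $0\notin\spec(W)$, which is what forces $\|\cdot\|_J$ to dominate $\|\cdot\|$ and thereby makes $\h_W$ embed into $\h$ as the concrete subspace $\dom(\sqrt{|W|})$. Without this hypothesis, $\h_W$ would genuinely have to be realized as an abstract completion outside $\h$, as in Theorem~\ref{Wbounded}, and a different construction (namely that of Theorem~\ref{V3}) would be required.
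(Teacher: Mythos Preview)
Your proposal is correct and follows essentially the same route as the paper: both arguments pivot on the isometry identity $\|h\|_J=\|\sqrt{|W|}\hs h\|$, use $0\notin\spec(W)$ to show that $\|\cdot\|_J$ is equivalent to the graph norm of $\sqrt{|W|}$, and invoke closedness of $\sqrt{|W|}$ to conclude that $(\dom(\sqrt{|W|}),\|\cdot\|_J)$ is complete, after which (ii) and (iii) follow by unitarity (with your explicit appeal to Theorem~\ref{prop1}.iii) making the passage from Hilbert-space frame to Krein-space frame transparent). The only substantive difference is that the paper takes the density of $\dom(W)$ in $\dom(\sqrt{|W|})$ in the $\|\cdot\|_J$-topology for granted, whereas you supply it via the spectral truncations $f_n=E([-n,n])f$; this is a standard core argument and is a welcome clarification rather than a different strategy.
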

\begin{proof} 
For the proof of i), we will use the fact that a densely defined linear operator $T$ is closed 
if and only if 
its domain is closed with respect to the graph norm 
\mbox{$(\|\cdot\|^2 + \| T(\hs\cdot\hs)\|^2)^{1/2}$}  \cite[Theorem 5.1]{Wd}. 
As by assumption $0\notin \spec(W)$, 
there exists an $\epsilon >0$ such that $|W|\geq \epsilon$. 
Therefore, 
for all $h\in \dom(W)$, we have $ \ip[h]{|W|\hs h}\geq \epsilon\hs \ip[h]{h}$. 
%$\ip[h]{h} \leq \epsilon^{-1} \ip[h]{|W|\hs h}$. 
Recall that $\| h\|_J^2 =\ip[h]{|W|\hs h}=  \|\sqrt{|W|}\hs h\|^2$ for all $h\in \dom(W)$. 
%by Equation \eqref{hWh}.  
Thus 
$$
\|h\|_J^2 = \|\sqrt{|W|}\hs h\|^2\leq 
\|h\|^2 \hsp +\hsp \|\sqrt{|W|}\hs h\|^2
\leq (\epsilon^{-1}\hsp + \hsp 1) \hs \|\sqrt{|W|}\hs h\|^2
=(\epsilon^{-1}\hsp +\hsp 1)\hs \|h\|_J^2,  
$$
hence  the $J$-norm $\| \cdot \|_J$ 
and the graph norm of $\sqrt{|W|}$ 
are equivalent on its common domain 
of definition. As remarked above, 
$\dom(\sqrt{|W|})$ is closed with respect to 
graph norm
since a self-adjoint operator is always closed. 
From the equivalence of norms 
on the dense subspace $\dom(W)\subset \dom(\sqrt{|W|})$, 
it follows that $\dom(\sqrt{|W|})$ is closed with respect to 
(the extension of) the norm $\|\cdot\|_J$. 
Taking the closure of $\dom(W)$   
%subspace  with respect to $\|\hs\cdot\hs\|_J$, 
allows us therefore to identify $\H_W$ with $\dom(\sqrt{|W|})$. 

To show ii), note $\sqrt{|W|}$ has a bounded inverse 
$\sqrt{|W|}^{-1}: \h\lra  \dom(\sqrt{|W|})$  
since $0\notin \spec(W)$. From 
$$
\left[\sqrt{|W|}^{-1}\hs h,\sqrt{|W|}^{-1}\hs h\right]_J 
= \left\langle\sqrt{|W|}^{-1}\hs h\,,|W|\sqrt{|W|}^{-1}\hs h\right\rangle
=\ip[h]{h}
$$
for all $h\in \dom(\sqrt{|W|})$,
we conclude that $\sqrt{|W|}^{-1}$ and thus $\sqrt{|W|}$ are unitary. 
Now iii) follows from the unitarity of $\sqrt{|W|}^{-1}$ as in the previous theorem. 
\end{proof}
Our last theorem deals with the general situation, where 
 $W$ may be unbounded and $0$ may belong to $\spec(W)$. 
\begin{theorem}\label{V3}
Let $W: \dom(W) \lra \h$ be a self-adjoint operator on 
the Hilbert space $\h$ such that $\ker(W)=\{0\}$. Then 
\begin{enumerate}
\item[\textit{i)}]
$\dom(\sqrt{|W|}\hs)$ is complete in the norm $\|\cdot\|_J$ if and only if 
\,$0\notin \spec(W)$.
\item[\textit{ii)}]
$\sqrt{|W|}\, :\, \dom(\sqrt{|W|}\hs)\, \lra\, \h$ \,can be extended to a unitary operator 
$$
U:=\overline{\sqrt{|W|}}\, : \,\h_W\,\lra\, \h. 
$$
\item[\textit{iii)}] $\{k_n\}_{n\in \N} \subset \h$ is a frame for the Hilbert space $\h$ 
with frame bounds $A\leq B$ if and only if 
$\{U^{-1}k_n\}_{n\in \N} \subset \h_W$ is a frame for the Krein space 
$\h_W$ with frame bounds $A\leq B$. 
\item[\textit{iv)}] If $\{k_n\}_{n\in \N} \subset \dom(\sqrt{|W|}^{\hs -1})$ is a frame for 
the Hilbert space $\h$, then the frame $\{U^{-1}k_n\}_{n\in \N}$ for 
 the Krein space $\h_W$ is given by $\{\sqrt{|W|}^{\hs -1} k_n \}_{n\in \N}$.
\end{enumerate}
\end{theorem}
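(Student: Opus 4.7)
The plan is to view the theorem as the unification of Theorems \ref{Wbounded} and \ref{Wunbounded}. The pivotal construction is once again the extension of $\sqrt{|W|}$ to a unitary $U:\h_W\to\h$; parts (iii) and (iv) will then follow mechanically, and (i) drops out as a by-product.

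I would prove (ii) first. Extending the identity \eqref{Jip} from $\dom(W)$ to the larger domain $\dom(\sqrt{|W|}\hs)$ gives $\|h\|_J^2 = \langle h, |W|\hs h\rangle = \|\sqrt{|W|}\,h\|^2$ for all $h\in \dom(\sqrt{|W|}\hs)$, so the map $\sqrt{|W|}:(\dom(\sqrt{|W|}\hs),\|\cdot\|_J)\to(\h,\|\cdot\|)$ is isometric. Using the spectral truncations $E(\{|W|\leq n\})h\to h$ in $\|\cdot\|_J$ (which is just dominated convergence against $\lambda\hs \dd\langle h, E(\lambda) h\rangle$), one sees that $\dom(W)$ is $\|\cdot\|_J$-dense in $\dom(\sqrt{|W|}\hs)$; combined with the defining density of $\dom(W)$ in $\h_W$, this yields a well-defined isometric extension $U:\h_W\to\h$. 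For surjectivity, $\ran(U)$ is closed as the image of a complete space under an isometry, and contains $\ran(\sqrt{|W|}\hs)$, which is dense in $\h$ because $\overline{\ran(\sqrt{|W|}\hs)}=\ker(\sqrt{|W|}\hs)^{\perp}=\h$ by self-adjointness and $\ker(W)=\{0\}$. Hence $U$ is unitary.

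Granting (ii), part (i) follows from the same isometric identification: $U$ sends $\dom(\sqrt{|W|}\hs)\subset\h_W$ onto $\ran(\sqrt{|W|}\hs)\subset\h$. If $0\notin\spec(W)$, then $\sqrt{|W|}^{\hs -1}$ is bounded, so $\ran(\sqrt{|W|}\hs)=\h$ and $\dom(\sqrt{|W|}\hs)$ is already complete in $\|\cdot\|_J$. Otherwise $\ran(\sqrt{|W|}\hs)$ is a proper dense subspace of $\h$, so $\dom(\sqrt{|W|}\hs)$ is not closed in $\h_W$ and therefore not complete.

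For (iii), I would use Theorem \ref{prop1} to replace ``frame for the Krein space $\h_W$'' by ``frame for the Hilbert space $(\h_W,[\cdot,\cdot]_J)$'' with identical bounds; since $U:(\h_W,[\cdot,\cdot]_J)\to(\h,\langle\cdot,\cdot\rangle)$ is unitary between Hilbert spaces, it preserves frames and their bounds, yielding the equivalence. For (iv), the hypothesis $k_n\in\dom(\sqrt{|W|}^{\hs -1})=\ran(\sqrt{|W|}\hs)$ guarantees $\sqrt{|W|}^{\hs -1}k_n\in\dom(\sqrt{|W|}\hs)$, and because $U$ extends $\sqrt{|W|}$ on this domain, $U^{-1}k_n=\sqrt{|W|}^{\hs -1}k_n$. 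The main technical point is the $\|\cdot\|_J$-density of $\dom(W)$ in $\dom(\sqrt{|W|}\hs)$ used in (ii); once that is secured, the remaining arguments are direct adaptations of the previously established cases.
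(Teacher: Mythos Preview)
Your proof is correct, but it follows a genuinely different route from the paper's. The paper argues modularly: it splits $\h=E([-1,1])\h\oplus E(\R\setminus[-1,1])\h$, so that $W$ restricts to a bounded Gram operator $W_1$ on the first summand and an unbounded one $W_2$ with $0\notin\spec(W_2)$ on the second; it then invokes Theorems~\ref{Wbounded} and~\ref{Wunbounded} separately to obtain $U=\overline{\sqrt{|W_1|}}\oplus\sqrt{|W_2|}$, and reads off (i) from Theorem~\ref{Wbounded}.i) applied to the first summand. Your argument bypasses this decomposition entirely: you establish the isometry of $\sqrt{|W|}$ directly on $\dom(\sqrt{|W|}\hs)$, prove the $\|\cdot\|_J$-density of $\dom(W)$ there via spectral truncation, and handle surjectivity with the one-line observation $\overline{\ran(\sqrt{|W|}\hs)}=\ker(\sqrt{|W|}\hs)^{\perp}=\h$; part (i) then falls out of the identification of $\dom(\sqrt{|W|}\hs)\subset\h_W$ with $\ran(\sqrt{|W|}\hs)\subset\h$ under $U$. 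Your approach is more self-contained and in fact renders Theorems~\ref{Wbounded} and~\ref{Wunbounded} special cases rather than prerequisites; the paper's approach, on the other hand, makes the structural decomposition into ``bounded but possibly non-regular'' and ``regular but possibly unbounded'' parts explicit, which is the narrative it set up in the opening of Section~\ref{Sec-W}. One small point worth tightening in your write-up: the identity $\|h\|_J^2=\langle h,|W|h\rangle$ is literally only defined for $h\in\dom(W)$, so it is cleaner to state the isometry as $\|h\|_J=\|\sqrt{|W|}\,h\|$ on $\dom(W)$ first, extend by completion, and then use your density argument to identify the extension with $\sqrt{|W|}$ on all of $\dom(\sqrt{|W|}\hs)$.
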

\begin{proof} The first part will be proven by reducing to the previous theorems. 
For this, we decompose $\h$ into the orthogonal sum 
$$
 \h=\h_1\oplus \h_2,\qquad  \h_1:= E([-1,1])\h , \qquad \h_2 := E(\hs\R\setminus [-1,1]\hs)\h, 
$$
where $E$ denotes the projection valued measure from \eqref{WE}. 
Since the spectral projections commute with $W$ and $J= E(0,\infty)  - E(-\infty,0)$, 
the Hilbert spaces $\h_1$ and $\h_2$ are 
invariant under the action of these operators: 
\begin{align*}
W_1:=W\hspace{-3pt}\upharpoonright_{\h_1}\, &: \,\h_1 \,\lra\, \h_1, &
W_2:=W\hspace{-3pt}\upharpoonright_{\h_2} \, &: \,\h_2\cap \dom(W)  \,\lra\, \h_2, \\
J_1:=J\hspace{-3pt}\upharpoonright_{\h_1}\, &: \,\h_1 \,\lra\, \h_1, &
J_2:=J\hspace{-3pt}\upharpoonright_{\h_2} \,&: \,\h_2  \,\lra\, \h_2. 
\end{align*}
Also, the inner products respect this decomposition, that is,  
$$
[h_1,h_2] = \ip[h_1]{Wh_2}=0,\quad [h_1,h_2]_J = \ip[h_1]{|W|h_2}=0,
$$
for all $h_1\in\h_1$, and $h_2 \in \h_2\cap \dom(W)$. Therefore we can consider the 
closures of $\h_1$ and $\h_2\cap \dom(W)$ under the norm $\|\cdot\|_J$ separately. 
Note that, by  the spectral theorem, $\dom(W)= \h_1\oplus (\h_2\cap \dom(W))$. 
Taking the norm closure gives 
$$
    \h_W= \h_{W_1} \oplus \h_{W_2}, 
$$
where  $(\h_{W_i}, [\cdot\hs,\cdot])$ denotes the Krein space constructed from 
the Gram operator $W_i$ on $\h_i$, \,$i=1,2$.  
%, as described in the beginning of this section. 
Applying Theorem \ref{Wbounded}.ii) and Theorem \ref{Wunbounded}.ii), 
we obtain a unitary operator 
\[ \label{U}
U:= \overline{\sqrt{|W_1|}}\oplus \sqrt{|W_2|} \ :\ 
\h_W=\h_{W_1} \oplus \h_{W_2}  \  \xrightarrow{\phantom{----}} \ \h=\h_1\oplus \h_2 
\]
such that its restriction to 
$\h_1\oplus\big(\h_2\cap \dom(\sqrt{|W|}\hs)\big)=\dom(\sqrt{|W|}\hs) $ 
is given by $\sqrt{|W_1|}\oplus \sqrt{|W_2|} = \sqrt{|W|}$. 
By the density of  $\dom(\sqrt{|W|}\hs)$ in $\h_W$, the unitary operator $U$ in \eqref{U} 
is the unique extension of $\sqrt{|W|}$. This proves ii). 

Now,  i) follows from Theorem \ref{Wbounded}.i) applied to 
$\h_1\subset \dom(\sqrt{|W|}\hs)$ and the bounded Gram operator 
$W_1=W\hspace{-3pt}\upharpoonright_{\h_1}$. Item iii) is a direct 
consequence of the unitarity of $U$ in ii), and iv) follows from 
$U\hspace{-3pt}\upharpoonright_{\dom(\sqrt{|W|})}\hs=\hs\sqrt{|W|}$,  
as observed in the previous paragraph. 
\end{proof}

By Theorem \ref{V3}.i),  if $0\in \spec(W)$, then the completion process will always require to 
add ``abstract'' elements to $\dom(\sqrt{|W|})$. 
One of the standard procedures is to view these elements as equivalence 
classes of Cauchy sequences. However, for $\L_2$-spaces and multiplication operators   
$W_\varphi$ as defined in \eqref{Wphi}, one can give an explicit description 
of $\h_{W_\varphi}$ in terms of measurable functions. 
In our final example, we illustrate this for $\h=\L_2(\R,\mu)$. 
In a certain sense, this is the generic case since 
any self-adjoint operator on a separable Hilbert space is 
unitarily equivalent to a direct sum of multiplication operators 
\cite[Theorem VII.3]{RS}.

\begin{example}
Let $\mu$ be a locally finite Borel measure on $\Sigma(\R)$ and let  $\F(\R)$ 
denote the space of all Borel measurable complex functions on $\R$. 
For $\varphi \in \F(\R)$, we define as in the end of the last section 
\begin{align*}
&\dom(W_\varphi):=\{ f\in \L_2(\R,\mu) : \int |f|^2\hs  |\varphi |^2 \hs \dd\mu <\infty\}, \\
& W_\varphi : \dom(W_\varphi) \lra \L_2(\R,\mu),\ \ (W_\varphi f)(t):= \varphi(t)\hs f(t). 
\end{align*}
Assume that $\varphi$ is real-valued and that $\mu(\{x\in\R:\varphi(x)=0\})=0$. 
Then $W_\varphi^*=W_\varphi$ and $\ker(W_\varphi)=\{0\}$ so that the assumptions 
of Theorem \ref{V3} are satisfied. Up to unitary equivalence, we may write 
\begin{align*}
&\!\!\!\! \h_{W_\varphi}=\left\{ f\in \F(\R) : \int |f|^2\hs  |\varphi |\hs \dd\mu <\infty\right\}, \\
 U=W_{|\varphi|^{1/2}} &\,:\, \h_{W_\varphi} \lra \L_2(\R,\mu),\ \ 
(W_{|\varphi|^{1/2}} f)(t):= \sqrt{|\varphi(t)|}\hs f(t), \\[2pt]
 U^{-1} =W_{|\varphi|^{-1/2}} &\,:\, \L_2(\R,\mu)\lra \h_{W_\varphi}, \ \ 
(W_{|\varphi|^{-1/2}} f)(t):= \mbox{$\frac{1}{\sqrt{|\varphi(t)|}}$}\hs f(t). 
\end{align*}
By Theorem \ref{V3}, 
any frame $\{ f_n\}_{n\in\N}$ for $\L_2(\R,\mu)$ 
determines a frame for the Krein space $\h_{W_\varphi}$ 
with the same frame bounds. It can be given by 
$\{\frac{1}{\sqrt{|\varphi|}} \hs f_n\}_{n\in\N}\subset \h_{W_\varphi}$. 
\end{example}

\mbox{ }\\
{\bf Acknowledgement.} 
We thank Francisco Mart\'inez Per\'ia for pointing out 
reference \cite{GMMMa} to us.

\bibliographystyle{amsplain}

\end{document}